\theoremstyle{plain}
\newtheorem{theorem}{Theorem}[section]
\newtheorem{corollary}{Corollary}[section]
\newtheorem{lemma}{Lemma}[section]
\newtheorem{proposition}{Proposition}[section]
\newtheorem{remark}{Remark}[section]
\newtheorem{definition}{Definition}[section]
\newcommand{\rstr}[2]{\left. #1 \right\rvert_{#2}}
\newcommand{\abs}[1]{\left\lvert#1\right\rvert}
\DeclareMathOperator{\coker}{coker}
\DeclareMathOperator{\Ima}{Im}
\DeclareMathOperator{\fe}{FE}
\title{On the Ruelle-Mayer Transfer Operators for Hölder Continuous Functions}
\author[1]{Alexander Baumgartner}
\affil[1]{{Centro di Ricerca Matematica Ennio De Giorgi, Scuola Normale Superiore, Italy} }
\date{\today}
\begin{document}

\maketitle
\abstract{We consider a family of operators connected with the geodesic flow on the modular surface. We show certain spectral information is retained after expanding their domain to the space of $\alpha$-Hölder continuous functions on the unit interval. For example, the point spectra associated with the Maass cusp forms and non-trivial zeroes of the Riemann zeta function to the right of the critical line remain unchanged when the Hölder constant is $(1/2+\varepsilon)$ and $3/4$ respectively. We briefly consider a three-term functional equation introduced by Lewis in the Hölder setting and provide a partial classification of solutions in this setting.}
\section{Introduction}

Let $G:(0,1)\to [0,1)$ be the Gauss map 
\begin{equation}\label{equation: gauss map}
    G(x) = \frac{1}{z}-\left\lfloor\frac{1}{z}\right\rfloor.
\end{equation}
We associate to $G$ the following family of transfer operators which we formally define as acting on some space of functions $f:[0,1]\to \mathbb{C}$:
\begin{equation}\label{equation: Mayer transfer operator}
    \mathcal{L}_{\beta}(f)(z)=\sum_{n=1}^\infty \frac{1}{(n+z)^{2\beta}}f\left(\frac{1}{n+z}\right),
\end{equation}
where $\beta \in \{z\in \mathbb{C}: \Re(\beta)>1/2\}$. When $\beta=1$, this is the familiar Ruelle-Perron-Frobenius operator for $G$ and can formally be interpreted as the right-adjoint of the Koopman operator $f\mapsto f\circ G$. The operators in the case $\beta \neq 1$ were first introduced by Mayer \cite{Mayer90}, who considered them acting on the Banach space of functions $A_\infty(D)$ which are holomorphic on $D:=\{z\in\mathbb{C}: \abs{z-1}< 3/2\}$ with continuous extension to $\overline{D}$.

For this choice of Banach space, Mayer also showed that the operator-valued function $\beta\mapsto \mathcal{L}_{\beta}$ admits a meromorphic continuation on $\mathbb{C}$, with simple poles at $\beta=k/2$ for $k\in\{1,0,-1,-2,\ldots\}$. The continuation for $\Re(\beta) >0$ can be obtained by rewriting $f=f(0)+f^\star$ with $f^\star:=f-f(0)$ and noting that
\begin{equation}\label{equation: Mayer transfer extended}
    \mathcal{L}_{\beta}(f)(z)=f(0)\zeta_H(2\beta,z+1)+\sum_{n=1}^\infty \frac{1}{(n+z)^{2\beta}}(f^\star)\left(\frac{1}{n+z}\right),
\end{equation}
where $\zeta_H(s,z)=\sum_{n=0}^\infty (n+z)^{-s}$ is the Hurwitz zeta function, which has an analytic continuation for all $s\neq 1$ and a simple pole of residue $1$ at $s=1$. In fact, it can further be analytically continued to $\beta \geq -k/2$ for any $k\in\mathbb{N}$ by noticing that
\begin{equation}\label{equation: Mayer transfer extended2}
    \mathcal{L}_{\beta}(f)(z)=\sum_{n=0}^k\frac{d^nf}{dz^n}(0)\frac{\zeta_H(n+2\beta,z+1)}{n!}+\sum_{n=1}^\infty \frac{1}{(n+z)^{2\beta}}(f^\star)\left(\frac{1}{n+z}\right),
\end{equation}
where $f^\star(z)=f(z)-  \sum_{n=0}^k\frac{d^nf}{d^nz}(0)z^n/n!$.
Furthermore $\mathcal{L}_{\beta}$ is a nuclear operator on $A_\infty(D)$ of order $0$ for all $\beta$. 

The Mayer transfer operator encodes geometric information about the modular surface, which is the quotient orbifold $\mathbf{M}$ obtained from considering the Poincaré upper half plane $\mathbb{H}:\{x+iy:x,y\in \mathbb{R},  y>0\}$ equipped with the usual Riemannian metric $ds^2=(dx^2+dy^2)/y^2$ and identifying points via equivalence relation $\mathbb{H}\ni z\sim (az+b)/(cz+d)$ for all $a,b,c,d\in\mathbb{Z}$ with $ad-bc=1$. Since $\mathcal{L}_{\beta}$ is a nuclear operator, the Fredholm determinants $\det(1-\mathcal{L}_{\beta})$ and $\det(1+\mathcal{L}_{\beta})$ are well-defined. These relate to Selberg Zeta function $Z(s)$  by the formula 
\begin{equation}\label{equation: zeta functions}
    Z(s)= \det(1-\mathcal{L}_s)(1+\mathcal{L}_s),
\end{equation}
see \cite{Mayer91}. 
The Selberg zeta function is closely connected with the spectral theory of the Laplace-Beltrami operator $\Delta:=-y^2(d^2/dx^2+d^2/dy^2)$ \cite{Iwaniec}. Equation \eqref{equation: Mayer transfer extended} is thus a way of connecting the spectral theory of $\Delta$ with that of $\mathcal{L}_\beta$. This connection has been elaborated upon by Lewis, Zagier, Mayer and Chang \cite{Lewis97,ZagierLewis,MayerChang96,MayerChang98}.

From a dynamical perspective, it is interesting to see to what extent the aforementioned spectral properties of $\mathcal{L}_s$ survive when we consider e.g. Hölder continuous functions instead. For $\alpha\in (0,1)$, let $C^\alpha([0,1])$ denote the space of $\alpha$-Hölder functions on $[0,1]$. If $\alpha = k+\eta$ with $k\in\mathbb{N}$ and $\eta\in [0,1)$, let $C^\alpha([0,1])$ be the set of functions for which the $k$-th derivative exists and is continuous or $\eta$-Hölder if $\eta=0$ or $\eta\in (0,1)$ respectively. Denote by $L_\beta$ the operator \eqref{equation: Mayer transfer extended2} acting on $C^\alpha([0,1])$, which we shall show to be well-defined and continuous for large enough $\alpha$. These operators are not compact, but we can decompose the spectrum $\sigma(L_\beta)$ into the essential spectrum $\sigma_e(L_\beta)$ and the discrete spectrum $\sigma_{\mathrm{disc}}(L_\beta)$ which consists of isolated eigenvalues of finite algebraic multiplicity. We prove the following.
\begin{theorem}\label{theorem: radius of essential spectrum}
    On the half-plane $\Re(\beta) >(1-\alpha)/2$, the radius $\rho_e(L_\beta):= \sup_{\lambda\in \sigma_\varepsilon(L_\beta)}\abs{\lambda}$ of the essential spectrum satisfies
    \[\rho_e(L_\beta)\leq \rho(\mathcal{L}_{\Re(\beta) +\alpha}):=\sup_{\lambda\in \sigma(\mathcal{L}_{\beta+\alpha})}\abs{\lambda} .\]
    Furthermore, the generalised eigenspaces belonging to eigenvalues $\lambda$ with $\abs{\lambda}>\rho_e(L_\beta)$ consist of analytic functions which extend to generalised eigenfunctions of $\mathcal{L}_\beta$. 
\end{theorem}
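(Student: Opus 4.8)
The plan is to estimate the essential spectral radius via a Hennion/Lasota–Yorke type argument, and then to bootstrap regularity of the "large" eigenfunctions. First I would set up two norms on $C^\alpha([0,1])$: the Hölder norm itself, and a weaker "embedding" norm coming from the inclusion $C^\alpha([0,1]) \hookrightarrow C^{\alpha'}([0,1])$ for some $\alpha' < \alpha$ (or simply the sup-norm together with a suitable lower-order Hölder seminorm). The key is a Lasota–Yorke inequality of the form
\[
\llbracket L_\beta^N f \rrbracket_\alpha \;\leq\; A\, \theta^N\, \llbracket f \rrbracket_\alpha \;+\; B_N\, \lVert f \rVert_{\alpha'},
\]
where $\llbracket\cdot\rrbracket_\alpha$ is the $\alpha$-Hölder seminorm and $\theta$ is controlled by the contraction of the inverse branches $z \mapsto 1/(n+z)$ of the Gauss map together with the weight $(n+z)^{-2\Re(\beta)}$. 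The branch $\psi_n(z) = 1/(n+z)$ contracts distances by a factor $|\psi_n'(z)| \asymp (n+z)^{-2}$, and Hölder-distorting a function by $\psi_n$ picks up $|\psi_n'|^\alpha \asymp (n+z)^{-2\alpha}$; combined with the weight this gives an effective weight $(n+z)^{-2(\Re(\beta)+\alpha)}$, which is exactly the symbol of $\mathcal L_{\Re(\beta)+\alpha}$ and is summable precisely when $\Re(\beta) > (1-\alpha)/2$. Summing over $n$ and tracking constants across $N$ iterates yields $\theta$ comparable to (a power of) $\rho(\mathcal L_{\Re(\beta)+\alpha})$. Since the inclusion $C^\alpha \hookrightarrow C^{\alpha'}$ is compact on $[0,1]$, Hennion's theorem then gives $\rho_e(L_\beta) \leq \theta \leq \rho(\mathcal L_{\Re(\beta)+\alpha})$.

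For the second assertion, take $\lambda \in \sigma_{\mathrm{disc}}(L_\beta)$ with $|\lambda| > \rho_e(L_\beta)$ and let $f \in C^\alpha([0,1])$ lie in the corresponding generalised eigenspace, i.e. $(L_\beta - \lambda)^m f = 0$ for some $m$. The strategy is a bootstrap: one shows that $L_\beta$ maps $C^\alpha$ into $C^{\alpha + \delta}$ on a suitable scale — indeed, because each inverse branch $\psi_n$ maps $[0,1]$ strictly inside itself and the series \eqref{equation: Mayer transfer extended2} converges locally uniformly on a complex neighbourhood of $[0,1]$ for $\Re(\beta) > (1-\alpha)/2$, the image $L_\beta f$ extends holomorphically to a fixed complex disc $D' \supset [0,1]$. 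Since $\lambda \neq 0$, writing $f = \lambda^{-1} L_\beta f$ (after reducing to the genuine eigenvector, then handling the Jordan chain inductively, each $(L_\beta-\lambda)^{j}f$ being again in the image of $L_\beta$ up to lower terms) forces $f$ itself to be real-analytic, indeed holomorphic on $D'$. A fixed-point/open-mapping argument then shrinks or enlarges the disc to the Mayer domain $D = \{|z-1| < 3/2\}$: one checks $\psi_n(D) \subset D$ for all $n \geq 1$ and that the tail of \eqref{equation: Mayer transfer extended2} converges in $A_\infty(D)$, so that $f$ in fact defines an element of $A_\infty(D)$ fixed (up to the Jordan structure) by $\mathcal L_\beta$. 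Hence $\lambda \in \sigma_{\mathrm{disc}}(\mathcal L_\beta)$ with the same generalised eigenfunction, as claimed.

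The main obstacle I expect is the Lasota–Yorke estimate when $\alpha \geq 1$, i.e. when $C^\alpha$ involves genuine derivatives: one must differentiate the series \eqref{equation: Mayer transfer extended2} term by term, and the $\zeta_H$-terms $\sum_{n=0}^k \frac{d^n f}{dz^n}(0)\,\zeta_H(n+2\beta, z+1)/n!$ need separate treatment since they are not of "weighted composition" type — though they are entire in $z$ near $[0,1]$ and contribute only a finite-rank, hence inessential, piece. Controlling the Hölder seminorm of the composition $(f^\star)\circ\psi_n$ uniformly in $n$ while extracting the sharp decay rate $(n+z)^{-2(\Re(\beta)+\alpha)}$ (rather than a lossy $(n+z)^{-2\Re(\beta)}$) is the delicate point, and is what pins the threshold at $(1-\alpha)/2$ and the bound at $\rho(\mathcal L_{\Re(\beta)+\alpha})$ rather than something weaker. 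A secondary subtlety is making the regularity bootstrap respect the Jordan chain structure, which I would handle by induction on the chain length using that $L_\beta$ itself is smoothing.
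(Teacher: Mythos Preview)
Your Hennion/Lasota--Yorke approach to the essential spectral radius bound is viable, but only with the weak norm $\|\cdot\|_{C^{\alpha'}}$ for some $\alpha'$ with $1-2\Re(\beta)<\alpha'<\alpha$; your parenthetical alternative of using the sup-norm fails precisely in the regime $\Re(\beta)\leq 1/2$, since $L_\beta$ is then unbounded on $C^0([0,1])$ and the iteration couples $\|L_\beta^{k}f\|_\infty$ back to $\|f\|_\alpha$, destroying the geometric rate. With the $C^{\alpha'}$ weak norm, however, $L_\beta$ is bounded on $C^{\alpha'}$, the unit ball of $C^\alpha$ is $C^{\alpha'}$-precompact, and iterating the pointwise inequality of Lemma~\ref{lemma: mayer operator inequalities} yields $\|L_\beta^n f\|_\alpha \leq (1+\varepsilon)^n \|\mathfrak{L}_{\Re(\beta)+\alpha}^n\|\,\|f\|_\alpha + B_n\|f\|_{C^{\alpha'}}$, whence Hennion gives the sharp bound. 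This is genuinely different from the paper's route: the authors explicitly state that they do \emph{not} use a Doeblin--Fortet--Lasota--Yorke inequality (Remark after Lemma~\ref{lemma: mayer operator inequalities}, considering only the $\|\cdot\|_\infty$ weak norm), and instead build bespoke compact approximants $L_\beta^l\circ P_{l,N}$ via piecewise-linear interpolation on a countable node set closed under Gauss-map preimages, then apply Nussbaum's formula directly. Your approach is shorter; theirs avoids appealing to an auxiliary scale $\alpha'$.

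Your bootstrap for the second assertion, however, has a real gap. You assert that $L_\beta f$ extends holomorphically to a complex neighbourhood of $[0,1]$ because the branches and weights are analytic there. But the series \eqref{equation: Mayer transfer extended2} contains $f^\star\bigl(\tfrac{1}{n+z}\bigr)$, and for complex $z$ the argument $\tfrac{1}{n+z}$ is off the real interval, where $f^\star\in C^\alpha([0,1])$ is simply undefined. The operator $L_\beta$ does \emph{not} smooth $C^\alpha$ functions to analytic ones, nor even to $C^{\alpha+\delta}$: the increment $f^\star(\psi_n(x))-f^\star(\psi_n(y))$ is only $O(|x-y|^\alpha)$, so the Hölder exponent is preserved, not improved. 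Consequently the identity $f=\lambda^{-1}L_\beta f$ yields no regularity gain and your induction never starts. The paper establishes the eigenspace identification by a duality argument (adapted from Ruelle): the restriction $\iota^*: C^\alpha([0,1])^*\to A_\infty(D)^*$ carries the generalised eigenspace of $L_\beta^*$ isomorphically onto that of $\mathcal{L}_\beta^*$, with injectivity proved by approximating $C^\alpha$ functions by analytic ones in a weaker $C^{\alpha'}$ topology (necessary since smooth functions are not dense in $C^\alpha$ itself). Fredholm index zero outside the essential spectrum then matches the primal dimensions.
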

The statement in the abstract concerning Maass cusp forms and nontrivial zeroes of the Riemann zeta function, which we make more precise in Remark \ref{remark: abstract}, turns out to follow immediately from this theorem.

Comparing the behaviour of transfer operators for different levels of regularity is a somewhat common theme in dynamics. One typically expects for a transfer operator associated to a sufficiently \enquote{nice} expanding map that the essential spectral radius vanishes for the operator acting on spaces of increasingly smooth functions, see e.g. \cite{Butterley}. In a sense the above theorem is therefore not too suprising. Indeed, we shall show that if $\alpha$ is a natural number, the proof of Theorem \ref{theorem: radius of essential spectrum} is a mundane application of standard techniques.

 However, the proof of the case $1/2<\alpha < 1$ is perhaps more interesting. In that case, we still obtain uniformly bounded estimates for $\rho_e(L_\beta)$ on vertical lines $\Re(\beta)=\sigma$ even when $\sigma < 1/2$, where the function $\beta\mapsto L_\beta$ is unbounded. However, by \eqref{equation: Mayer transfer operator}, the unboundedness in $\beta$ is due to the contribution rank one operator $f\mapsto f(0)\zeta_H(2\beta,z+1)$, which does not contribute to the essential spectral radius. Nevertheless, we do not obtain this result using a Doeblin-Fortet-Lasota-Yorke inequality, so our proof is substantially different from the standard approaches to these type of problems.

In Section \ref{section: three-term functional equation}, we interpret this result in terms of Lewis' three-term functional equation. In particular, we can interpret this as an extension of the \enquote{bootstrapping} result of Lewis and Zagier, who proved in \cite{ZagierLewis} that real-analytic solutions of this equation on $(1,\infty)$ satisfying certain growth conditions automatically extend to holomorphic functions on the cut plane $\mathbb{C}\backslash(-\infty,-1]$. We show that for $\Re(\beta)>0$, we may even replace real-analyticity with a \textit{Hölder condition} and obtain that it is still the restriction of a holomorphic function on $\mathbb{C}\backslash(-\infty,-1]$.
\section{Preliminaries}

\subsection{On the Gauss Map}\label{subsection: on the Gauss map}
We briefly elucidate the Markov Structure of the Gauss map and define some notation which will be useful later on. There is a natural countable partition of the unit interval into intervals $(1/(n+1),1/n], n\in \mathbb{N}$. Each of these intervals are mapped by $G$ homeomorphically onto $[0,1)$. We denote by $\psi_n:[0,1) \to (1/(n+1),1/n]$ the inverse branch $\psi_n(x)= 1/(n+x)$. Given a sequence $n_1,n_2,\ldots,n_k\in\mathbb{N}$ we denote by $\psi_{n_1,n_2,\ldots,n_k}$ the map defined by
\begin{equation}\label{equation: inverse branches}
    \psi_{n_1,n_2,\ldots,n_k}:=\psi_{n_1}\circ \cdots \psi_{n_{k-1}}\circ\psi_{n_k}(x) = \frac{1}{ n_1+ \frac{1}{\ddots +\frac{1}{n_k+x}}}.
\end{equation}
We also denote the right-hand side by $[n_1,\ldots,n_k+x]$ as a simplifying bit of notation and we let $\langle n_1,\ldots,n_k\rangle$ denote the set $\{[n_1,\ldots,n_k+x]: x\in [0,1)\}$. 

\subsection{The Essential Spectral Radius}
In Section \ref{section: uniformly bounded essential spectrum}, we provide estimates for the essential spectrum of $L_\beta$, so we provide a brief reminder of some definitions. Let $I$ be the identity operator. We recall that the spectrum $\sigma(T)\subset \mathbb{C}$ of an operator $T$ on a Banach space $B$ is the set of all $\lambda \in\mathbb{C}$ for which $(T-\lambda I)$ is not invertible. Following Browder \cite{Browder1961}, we define the essential spectrum $\sigma_\varepsilon(T)\subset \sigma(T)$ to be the set of all $\lambda$ for which $(T-\lambda I)B$ is not closed, $\bigcup_{r=1}^{+\infty} \ker(T-\lambda I)^r$ is infinite-dimensional or for which $\lambda$ is a limit point of $\sigma(T)$. A nice consequence of this definition is that $\sigma(T)\setminus \sigma_\varepsilon(T)$ consists solely of isolated eigenvalues with finite-dimensional generalised eigenspaces.

There is a convenient formula for the essential spectral radius due to Nussbaum \cite{Nussbaum} that we now briefly introduce. Let $K$ be the subspace of compact operators on $B$. Define the seminorm $\|\cdot\|_K$ on the space of bounded linear operators on $B$ by
\[\|T\|_K = \min \{\|T-T'\|:T'\in K\},\]
and let $T:B\to B$ be a bounded linear operator. Then its essential spectral radius satisfies 
\begin{equation}\label{equation: Nussbaum}
\rho_e(T):= \sup_{\lambda\in \sigma_\varepsilon(T)}\abs{\lambda} = \lim_{l\to +\infty} \|T^l\|_K^{1/l}.    
\end{equation}

\subsection{Definition of Transfer Operators}
In this subsection, we show that the Mayer transfer operators with $\Re(\beta)>0$ are well-defined on Hölder-continuous functions with a sufficiently large Hölder exponent. We also provide a brief overview of some definitions and notations which will be used throughout this preprint.

Denote for $0<\alpha<1$ by $C^\alpha([0,1])$ the space of $\alpha$-Hölder continuous functions on $[0,1]$ equipped with the norm
\[ \|f\| := \| f \|_\infty + \|f\|_\alpha, \text{where }\| f\|_\alpha= \sup_{\substack{x,y:\\x\neq y}}\frac{\lvert f(x)-f(y)\rvert}{\abs{x-y}^\alpha}\]
and $\| f \|_\infty = \sup_{[0,1]} \abs{f}$.
It will turn out to be useful to define for $f\in C^\alpha([0,1])$ the auxiliary function $\lvert f\rvert_\alpha$ on $[0,1]$ by \[\lvert f\rvert_\alpha(x)=\sup_{\substack{y:\\x\neq y}}\frac{\lvert f(x)-f(y)\rvert}{\abs{x-y}^\alpha}.\]
We remark that $\| \lvert f\rvert_\alpha \|_\infty = \|f\|_\alpha$.

For $\Re(\beta)>1/2$, the operator \eqref{equation: Mayer transfer operator} is also well defined on the space $L^\infty([0,1])$ of bounded functions equipped with the $\|\cdot\|_\infty$-norm. We denote the operator acting on this space by $\mathfrak{L}_\beta$.

\begin{lemma}\label{lemma: mayer operator inequalities}
    Let $\alpha \in (0,1]$, $\Re(\beta) > (1-\alpha)/2$ with $\beta \neq 1/2$. Then the operator $L_\beta$ is well-defined and continuous on $C^\alpha([0,1])$. Furthermore there exists a constant $C$ and a family of constants $(D_{\varepsilon})_{\varepsilon>0}$ depending only on $\alpha$ and $\Re(\beta)$ such that
    \[\left\|L_\beta(f)\right\|_\infty \leq C\|f\|\text{, and }\]
    \[\lvert L_\beta (f)\rvert_\alpha(x)\leq  \mathfrak{L}_{\Re(\beta)+\alpha}((1+\varepsilon)|f|_\alpha )(x) + D_\varepsilon\|f\|_\infty \text{ for all } x\in [0,1], \varepsilon>0 .\]
\end{lemma}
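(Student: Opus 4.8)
The plan is to work with the $k=0$ instance of \eqref{equation: Mayer transfer extended}, which is available because $\Re(\beta)>(1-\alpha)/2\ge 0$: thus $L_\beta(f)(z)=f(0)\,\zeta_H(2\beta,z+1)+\sum_{n\ge 1}(n+z)^{-2\beta}f^\star(\psi_n(z))$ with $f^\star:=f-f(0)$ and $\psi_n(z)=1/(n+z)$. For the sup bound I would estimate $|f(0)\,\zeta_H(2\beta,z+1)|\le\|f\|_\infty\sup_{a\in[1,2]}|\zeta_H(2\beta,a)|$, finite precisely because $\beta\neq 1/2$ keeps $2\beta$ away from the pole of $\zeta_H$ at $s=1$, and bound $|f^\star(\psi_n(z))|=|f(\psi_n(z))-f(0)|\le\|f\|_\alpha(n+z)^{-\alpha}$, so that the series is dominated by $\|f\|_\alpha\sum_n(n+z)^{-2\Re(\beta)-\alpha}<\infty$ since $2\Re(\beta)+\alpha>1$. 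This yields $\|L_\beta f\|_\infty\le C\|f\|$ and simultaneously shows that $L_\beta f$ is well defined pointwise (the series converges absolutely).

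For the Hölder estimate, fix $x$ and an arbitrary $y\neq x$ in $[0,1]$ and set $t=|x-y|$. I would split $L_\beta f(x)-L_\beta f(y)$ into three pieces: (I) $f(0)\,[\zeta_H(2\beta,x+1)-\zeta_H(2\beta,y+1)]$; (II) $\sum_n(n+x)^{-2\beta}[f(\psi_n(x))-f(\psi_n(y))]$; (III) $\sum_n[(n+x)^{-2\beta}-(n+y)^{-2\beta}]\,f^\star(\psi_n(y))$. Each is absolutely convergent, (III) requiring only $\Re(\beta)>0$. Piece (I) is $O(\|f\|_\infty\,t)$ since $\zeta_H(2\beta,\cdot)$ is Lipschitz on $[1,2]$ (again using $\beta\neq 1/2$), and piece (III) is handled crudely via $|f^\star(\psi_n(y))|\le 2\|f\|_\infty$ together with the mean value estimate $\sum_n|(n+x)^{-2\beta}-(n+y)^{-2\beta}|\le|2\beta|\,t\sum_n(n+\min(x,y))^{-2\Re(\beta)-1}\le|2\beta|\,t\,\zeta(2\Re(\beta)+1)$, so (III) is also $O(\|f\|_\infty\,t)$. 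After dividing by $t^\alpha$ (and since $t\le 1$, $\alpha\le 1$), (I) and (III) together contribute only $O(\|f\|_\infty)$ to the difference quotient, which goes into $D_\varepsilon$.

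Piece (II) is the crux. Using $|f(\psi_n(x))-f(\psi_n(y))|\le\lvert f\rvert_\alpha(\psi_n(x))\,|\psi_n(x)-\psi_n(y)|^\alpha$ and $|\psi_n(x)-\psi_n(y)|=t/((n+x)(n+y))$, the $n$-th term of (II) divided by $t^\alpha$ has absolute value $(n+x)^{-2\Re(\beta)-2\alpha}\,\big((n+x)/(n+y)\big)^\alpha\,\lvert f\rvert_\alpha(\psi_n(x))$. I would split $\mathbb{N}$ into the good set $\mathcal{G}=\{n:((n+x)/(n+y))^\alpha\le 1+\varepsilon\}$ and the remainder $\mathcal{B}$. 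Over $\mathcal{G}$ the terms sum to at most $(1+\varepsilon)\sum_{n\ge 1}(n+x)^{-2\Re(\beta)-2\alpha}\lvert f\rvert_\alpha(\psi_n(x))=\mathfrak{L}_{\Re(\beta)+\alpha}((1+\varepsilon)\lvert f\rvert_\alpha)(x)$, which is finite because $\Re(\beta)+\alpha>(1+\alpha)/2>1/2$. The key point is that $\mathcal{B}$ is negligible: $(n+x)/(n+y)>(1+\varepsilon)^{1/\alpha}$ forces $x>y$ and $n+y<t/c_\varepsilon$, where $c_\varepsilon:=(1+\varepsilon)^{1/\alpha}-1>0$, so $\mathcal{B}=\emptyset$ unless $t>c_\varepsilon$, and then $\mathcal{B}\subseteq\{1\le n<1/c_\varepsilon\}$ is finite with cardinality bounded in terms of $\alpha$ and $\varepsilon$ only. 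On $\mathcal{B}$ one bounds $|f(\psi_n(x))-f(\psi_n(y))|\le 2\|f\|_\infty$ and $(n+x)^{-2\Re(\beta)}\le 1$, so the contribution of $\mathcal{B}$ to the difference quotient is at most $2\|f\|_\infty\,|\mathcal{B}|/t^\alpha\le 2\|f\|_\infty\,|\mathcal{B}|\,c_\varepsilon^{-\alpha}$, again $O(\|f\|_\infty)$. Adding (I)--(III) and taking the supremum over $y\neq x$ gives $\lvert L_\beta f\rvert_\alpha(x)\le\mathfrak{L}_{\Re(\beta)+\alpha}((1+\varepsilon)\lvert f\rvert_\alpha)(x)+D_\varepsilon\|f\|_\infty$; continuity of $L_\beta$ on $C^\alpha([0,1])$ then follows by taking $\varepsilon=1$, using that $\mathfrak{L}_{\Re(\beta)+\alpha}$ is bounded on $L^\infty$ (as $\Re(\beta)+\alpha>1/2$) and $\|\lvert f\rvert_\alpha\|_\infty=\|f\|_\alpha$, so that $\|L_\beta f\|_\alpha=\|\lvert L_\beta f\rvert_\alpha\|_\infty\le C'\|f\|$.

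The hard part is precisely this treatment of $\mathcal{B}$. A naive triangle-inequality estimate of (II) and (III) produces an error term proportional to $\|f\|_\alpha$ rather than $\|f\|_\infty$, and also destroys the factor $1+\varepsilon$ needed for Theorem \ref{theorem: radius of essential spectrum}. Both defects vanish once one observes that the only indices where the contraction of $\psi_n$ fails to make $(n+x)^\alpha$ and $(n+y)^\alpha$ comparable up to $1+\varepsilon$ are finite in number and occur only when $|x-y|$ is bounded below in terms of $\varepsilon$, so that there the blow-up of $t^{-\alpha}$ is under control and one can afford the lossy $\|f\|_\infty$-bound.
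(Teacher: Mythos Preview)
Your proof is correct and follows essentially the same route as the paper's: both split $L_\beta f(x)-L_\beta f(y)$ into a Hurwitz-zeta piece, an $f$-difference piece weighted by $(n+x)^{-2\beta}$, and a weight-difference piece, and both isolate the main term $\mathfrak{L}_{\Re(\beta)+\alpha}((1+\varepsilon)\lvert f\rvert_\alpha)(x)$ by peeling off finitely many bad indices where $|x-y|$ is bounded below in terms of $\varepsilon$. The only cosmetic difference is that the paper expresses the correction factor via a Taylor expansion as $(1+R|x-y|/n^{1+\alpha})$ and then case-splits on whether $|x-y|\le\varepsilon/R$, whereas you keep the exact ratio $((n+x)/(n+y))^\alpha$ and split on whether it exceeds $1+\varepsilon$; these are equivalent ways of packaging the same observation.
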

\begin{proof}
     We note that for $f\in C^\alpha([0,1])$ we have that $\abs{f(x)-f(0)}\leq \| f\|_\alpha x^\alpha $ as $x\to 0$, hence we have that
     \begin{equation}\label{equation: holder difference}
         \abs{L_\beta(f)(z)}\leq \|f\|_\infty\abs{\zeta_H(2\beta,z+1)} + \| f\|_\alpha \sum_{n=1}^\infty \frac{1}{(n+z)^{2\Re(\beta)+\alpha}}.
     \end{equation}
To bound the auxiliary function, we start with a basic application of Taylor's theorem. Let $\theta$ be a complex number with $\Re(\theta)>0$. Let $x,y\in [0,1]$. Then 
\begin{equation}\label{equation: Taylor}
    \abs{(n+y)^{-\theta}-(n+x)^{-\theta}+\theta(n+y)^{-\theta-1}(y-x)} \leq K(y-x)^2
\end{equation}
for a constant $K>0$ which can be uniformly bounded over all $x,y$ and $n\geq 1$.

Note that
\[\frac{f\left(\frac{1}{n+x}\right)}{(n+x)^{2\beta}}-\frac{f\left(\frac{1}{n+y}\right)}{(n+y)^{2\beta}}=\left(\frac{f\left(\frac{1}{n+x}\right)}{(n+x)^{2\beta}}-\frac{f\left(\frac{1}{n+y}\right)}{(n+x)^{2\beta}}\right) + \left(\frac{f\left(\frac{1}{n+y}\right)}{(n+x)^{2\beta}}-\frac{f\left(\frac{1}{n+y}\right)}{(n+y)^{2\beta}}\right).\]
By first using the definition of the auxiliary function, and then applying \eqref{equation: Taylor} for $\theta =\alpha$ we find that there exists some constant $R>0$ which is independent of $f$ for which
\begin{equation*}
    \begin{split}
        \abs{f\left(\frac{1}{n+x}\right)-f\left(\frac{1}{n+y}\right)}&\leq 
         \lvert f\rvert_\alpha\left(\frac{1}{n+x}\right)\abs{\frac{x-y}{(n+y)(n+x)}}^\alpha  \\ &\leq\lvert f\rvert_\alpha\left(\frac{1}{n+x}\right)\frac{\abs{x-y}^\alpha}{(n+x)^{2\alpha}}\left(1+\frac{R\abs{x-y}}{n^{1+\alpha}}\right).
    \end{split}
\end{equation*}
By estimating the second term using \eqref{equation: Taylor} for $\theta = 2\beta$, we obtain that 
\begin{equation}\label{equation: summand for Hölder}
    \frac{\abs{\frac{f\left(\frac{1}{n+x}\right)}{(n+x)^{2\beta}}-\frac{f\left(\frac{1}{n+y}\right)}{(n+y)^{2\beta}}}}{\abs{x-y}^\alpha} \leq \frac{\lvert f\rvert_\alpha(\frac{1}{n+x})}{(n+x)^{2(\Re(\beta)+\alpha)}}\left(1+\frac{R\abs{x-y}}{n^{1+\alpha}}\right) + D'\abs{x-y}^{1-\alpha}\frac{\abs{f\left(\frac{1}{n+y}\right)}}{(n+y)^{2\beta +1 }} 
\end{equation}
for some uniformly bounded constant $D'>0$. 
 Let $\varepsilon>0$. If $\abs{x-y}\leq \varepsilon/R$, we sum over all $n$ in \eqref{equation: summand for Hölder} to obtain
 \[\sum_{n=1}^\infty\frac{\abs{\frac{f\left(\frac{1}{n+x}\right)}{(n+x)^{2\beta}}-\frac{f\left(\frac{1}{n+y}\right)}{(n+y)^{2\beta}}}}{\abs{x-y}^\alpha}\leq \mathfrak{L}_{\Re(\beta)+\alpha}((1+\varepsilon)|f|_\alpha )(x) + D'\|L_{\Re(\beta)+1/2}|f|\|_\infty ,\]
 where $|f|$ denotes the function $x\mapsto |f(x)|$. Else let $N\in\mathbb{N}$ be large enough such that $ R/N^{1+\alpha} \leq \varepsilon$. Sum both sides of the inequality
\begin{equation*}
   \frac{\abs{\frac{f\left(\frac{1}{n+x}\right)}{(n+x)^{2\beta}}-\frac{f\left(\frac{1}{n+y}\right)}{(n+y)^{2\beta}}}}{\abs{x-y}^\alpha} \leq \frac{2\|f\|_\infty}{n^{2\Re(\beta)}\abs{x-y}^\alpha}
\end{equation*} for $n\in\{1,2,\ldots,N\}$ and add the respective sides in \eqref{equation: summand for Hölder} over all $n> N$. After taking the supremum over all $y$ we then obtain that $\lvert L_\beta (f)\rvert_\alpha(x)\leq  \mathfrak{L}_{\Re(\beta)+\alpha}((1+\varepsilon)|f|_\alpha)(x) + E_\varepsilon\|L_{\Re(\beta)+1/2}|f|\|_\infty$ for some $E_\varepsilon>0$. The lemma follows from boundedness of the operator $L_{\Re(\beta)+1/2}$ on $L^\infty([0,1])$

\end{proof}
\begin{remark}
    To apply the usual method for obtaining quasi-compactness with essential spectral radius less than $r$, we would need an estimate of the form 
    \[ \|L_\beta^n f\|_\alpha\leq R_n \|f\|_\infty +r_n\|f\|_\alpha,\]
    where $R_n,r_n>0$ and $\liminf_n (r_n)^{1/n}\leq r$. 
    It is indeed possible to find an upper bound for $\|L_\beta^n f\|_\alpha$ by iteratively using Lemma \ref{lemma: mayer operator inequalities}. Unfortunately, this does not yield a quantitatively good upper bound, both due to the dependence on $\|f\|_\alpha$ in the estimate \eqref{equation: holder difference} and when considering $\beta$ for which $\sup_{z\in[0,1]}\abs{\zeta_H(2\beta,z+1)}$ is large. 
\end{remark}

\subsection{Operator-Valued Holomorphic Functions}
On occasion, we shall make use of the notion of holomorphy of functions, so we briefly recall the definition. We follow Chapter III, Section 1 and Section 3 in \cite{Kato}. Let $U\subset C$ be a simply connected open set of $\mathbb{C}$. If $B$ is a Banach space with norm $\|\cdot\|$, or more generally a topological vector space, and $F: U\to B: \beta\mapsto f_{\beta}$ is some $B$-valued function, we define $F$ to be homeomorphic if $(f_\beta-f_{\beta_0})/(\beta-\beta_0)$ has a limit  as $\beta\to\beta_0$ for $\beta_0\in U$.

If $B_1,B_2$ are Banach spaces, let $L(B_1,B_2)$ be the set of bounded operators equipped with the usual operator norm, then a map $\mathcal{K}:U\mapsto L(B_1,B_2):\beta\mapsto K_\beta$ is defined to be holomorphic if it is holomorphic according to the Banach space definition outlined above. 

Many of the usual theorems in complex analysis continue to hold for Banach or topological vector space-valued functions. In particular, holomorphic implies infinitely differentiable with a locally converging Taylor expansion. The unique extension theorem also continues to hold.
Finally, note that it is straightforward to show that $\beta\to L_\beta$ is a meromorphic function on $\Re(\beta)>(1-\alpha)/2$ for any $\alpha\in\mathbb{R}_{>0}$ with derivative 
\[ \left(\frac{d L_\beta}{d\beta}\right)(f)(z)= -\sum_{n=0}^k(n+2\beta)\frac{d^nf}{dz^n}(0)\frac{\zeta_H(n+2\beta+1,z+1)}{n!} -2 \sum_{n=1}^\infty \frac{\ln(n+z)}{(n+z)^{2\beta}}(f^\star)\left(\frac{1}{n+z}\right).\]
\section{Uniformly Bounded Essential Spectrum}\label{section: uniformly bounded essential spectrum}
By \eqref{equation: Nussbaum}, we can estimate the essential spectral radius by finding for each $l\in \mathbb{N}$ a compact operator $T'$ for which $\|L_\beta^l - T' \|$ is small. A first naive approach would be to consider the operator $P$ which sends $f$ to a piecwise linear approximation that interpolates $f$ on the points $0,\frac{1}{N},\frac{2}{N},\cdots,1$ and considering $\|L_\beta^l-L_\beta^l\circ P\|$ as $N\to \infty$. For $\Re(\beta)>0$, we indeed have by Lemma \ref{lemma: mayer operator inequalities} that for any $\varepsilon>0$ there exists some $N$ for which $\lvert (L_\beta-L_\beta\circ P) (f)\rvert_\alpha(x)\leq  \mathfrak{L}_{\Re(\beta)+\alpha}((1+\varepsilon)|f-P(f)|_\alpha)(x)+\varepsilon$. However, we do not have sufficient control over the $\|\cdot\|_\infty$-norm. We remedy this by considering a \textit{countable} interpolation which ensures $(L_\beta^l\circ P)f$ remains a (nonlinear) interpolation of
$L_\beta^l f$. 

To this end, let $\mathcal{P}_{0,N}$ be the set of points $\left\{0,\frac{1}{N},\frac{2}{N},\cdots,\frac{k-1}{N}\right\}$. For $l\in \mathbb{N}$, we let \[\mathcal{P}_{l,N}:=\bigcup_{n_1,\ldots,n_{l}\in \mathbb{N}}\psi_{n_1,\ldots,n_{l}}(\mathcal{P}_{0,N})\text{, and } \widetilde{\mathcal{P}_{l,N}} =\{1\}\cup\bigcup_{l'=0}^l\mathcal{P}_{l',N},\]
i.e. $\mathcal{P}_{l',N}$ is the set of all points of the form $[n_1,\ldots,n_{l'}+k/N]$ and $\widetilde{\mathcal{P}_{l,N}}$ is the union of the set of all such points for $l'\leq l$ with $\mathcal{P}_{0,N}$.

We remark that the set $\widetilde{\mathcal{P}_{l,N}}$ is closed. Indeed, the set of accumulation points are the set of all fractions that can be written as $[n_1,\ldots,n_{l'}]$ for $l'<l$ and these are in $\widetilde{\mathcal{P}_{l,N}}$ by definition. Hence its complement is a countable disjoint collection of open intervals and we can define the following.

\begin{definition}
    For $\alpha \in (0,1)$, we define the operator $P_{l,N}:C^\alpha([0,1])\to C^\alpha([0,1])$ by letting $P_{l,N}(f)(x)= f(x)$ for $x\in \widetilde{\mathcal{P}_{l,N}}$. If $x\neq \widetilde{\mathcal{P}_{l,N}}$, let $a$ and $b$ the respective largest and smallest element of $\widetilde{\mathcal{P}_{l,N}}$ such that $ a<x<b$ and interpolate the function $f$ between the two points to obtain
    \[P_{l,N}(f)(x) =  \frac{b-x}{b-a}f(a) + \frac{x-a}{b-a}f(b).\]
\end{definition}
We prove it is well-defined in Lemma \ref{lemma: well-defined} in the appendix. It is fairly easy to show that $P_{l,N}$ is \textit{not} a compact operator. However, we shall show that $\mathcal{L}_{\beta}^l \circ P_{l,N}$ is compact. As an intermediate step, we prove the following proposition. 
\begin{proposition}\label{proposition: uniformising convergence}
    Let $\alpha \in (0,1)$ and let $l\in \mathbb{N}$. Let $K>0$. For each $k\in \mathbb{N}$, let $\mathcal{F}_k\subset C^\alpha([0,1])$ be a family of functions where all $f\in \mathcal{F}_k$ satisfy $\|f\|\leq K$. Assume furthermore that for all $n_1,n_2,\ldots,n_l\in\mathbb{N}$ we have that
    \begin{equation}\label{equation: holder convergence on the cylinder sets}
        \lim_{k\to \infty}\sup_{f\in \mathcal{F}_k}\|f\circ \psi_{n_1,\ldots,n_l} \|=0.
    \end{equation}
    Then 
    \[\lim_{k\to \infty}\sup_{f\in \mathcal{F}_k}\|L_{\beta}(f)\circ \psi_{n_2,\ldots,n_l}\|=0\]
    for all $n_2,\ldots,n_l$ if $l>1$ and $\lim_{k\to \infty}\sup_{f\in \mathcal{F}_k}\|L_{\beta}(f)\|= 0$ if $l=1$.
\end{proposition}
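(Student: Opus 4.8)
The plan is to analyse how the transfer operator $L_\beta$ redistributes control of a function over the Markov cylinders of the Gauss map. Fix $n_2,\ldots,n_l\in\mathbb{N}$ (or, when $l=1$, work directly on $[0,1]$) and write $g = L_\beta(f)\circ\psi_{n_2,\ldots,n_l}$. Using the extended expression \eqref{equation: Mayer transfer extended2} with $k=0$ (since $\Re(\beta)>(1-\alpha)/2$ and $\alpha\in(0,1)$), we have, for $w$ in the image of $\psi_{n_2,\ldots,n_l}$,
\[
L_\beta(f)(w) = f(0)\,\zeta_H(2\beta,w+1) + \sum_{n_1=1}^\infty \frac{1}{(n_1+w)^{2\beta}}\,(f^\star)\!\left(\tfrac{1}{n_1+w}\right),
\]
and $\psi_{n_1}(\psi_{n_2,\ldots,n_l}(x)) = \psi_{n_1,n_2,\ldots,n_l}(x)$, so the $n_1$-th summand, as a function of $x\in[0,1]$, is $(n_1+\psi_{n_2,\ldots,n_l}(x))^{-2\beta}\,(f^\star\circ\psi_{n_1,\ldots,n_l})(x)$. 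The first step is to control each individual term: for each fixed $n_1$, the weight $x\mapsto(n_1+\psi_{n_2,\ldots,n_l}(x))^{-2\beta}$ is a smooth, uniformly bounded function on $[0,1]$ (its $C^\alpha$-norm is bounded by a constant depending only on $\Re(\beta)$ and $n_1$, since $\psi_{n_2,\ldots,n_l}$ maps into $(0,1]$), so multiplication by it is a bounded operator on $C^\alpha([0,1])$. Moreover $f^\star\circ\psi_{n_1,\ldots,n_l} = f\circ\psi_{n_1,n_2,\ldots,n_l} - f(0)$ differs from $f\circ\psi_{n_1,n_2,\ldots,n_l}$ by a constant, so hypothesis \eqref{equation: holder convergence on the cylinder sets} (applied with the index string $n_1,n_2,\ldots,n_l$, noting $|f(0)|\le\|f\|_\infty\le K$) together with the convergence $\sup_{f\in\mathcal F_k}\|f\|_\infty\to 0$ forces $\sup_{f\in\mathcal F_k}\|f^\star\circ\psi_{n_1,\ldots,n_l}\|\to 0$ for every fixed $n_1$. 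Hence every finite partial sum $\sum_{n_1=1}^{M}$ converges to $0$ in $C^\alpha([0,1])$-norm, uniformly over $\mathcal F_k$, and the $f(0)\zeta_H(2\beta,\cdot+1)$ term, being of $C^\alpha$-norm $\lesssim|f(0)|\le\|f\|_\infty$, also goes to $0$.

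The main obstacle — and the heart of the proposition — is the uniform tail estimate: I must show that the tail $\sum_{n_1>M}$ is small in $C^\alpha$-norm uniformly in $f\in\mathcal F_k$ (for all $k$ simultaneously, using only $\|f\|\le K$), so that the limit can be interchanged with the infinite sum. This is exactly the content of the estimates in Lemma \ref{lemma: mayer operator inequalities}: applying the truncated version of that lemma's argument to $\sum_{n_1>M}$ in place of the full sum gives
\[
\Big\|\sum_{n_1>M}\tfrac{1}{(n_1+\,\cdot\,)^{2\beta}}\,f^\star(\tfrac{1}{n_1+\,\cdot\,})\Big\|_\infty \le \|f\|_\alpha\!\!\sum_{n_1>M}\!\frac{1}{n_1^{2\Re(\beta)+\alpha}},
\]
and a tail version of the auxiliary-function bound \eqref{equation: summand for Hölder} gives, for the Hölder seminorm of the tail, a bound of the form $\mathfrak L_{\Re(\beta)+\alpha}\big((1+\varepsilon)|f|_\alpha\big)$ restricted to the terms $n_1>M$ plus $D'\|L_{\Re(\beta)+1/2}|f|\|_\infty$, all of which are $\le C_M\|f\|$ with $C_M\to 0$ as $M\to\infty$ because $2\Re(\beta)+\alpha>1$ makes the relevant series convergent. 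Since $\psi_{n_2,\ldots,n_l}$ is a contraction into $(0,1]$ with uniformly bounded derivatives, precomposition with it only multiplies these bounds by a constant depending on $n_2,\ldots,n_l$. So the tail is $\le C_M\cdot K$ uniformly over $\bigcup_k\mathcal F_k$.

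Combining the two steps: given $\delta>0$, choose $M$ with $C_M K<\delta/2$ (tail estimate, uniform in $k$), then choose $k_0$ so that for $k\ge k_0$ the partial sum $\sum_{n_1\le M}$ and the $f(0)$-term together have $C^\alpha$-norm $<\delta/2$ uniformly over $\mathcal F_k$ (first step); then $\sup_{f\in\mathcal F_k}\|L_\beta(f)\circ\psi_{n_2,\ldots,n_l}\|<\delta$ for $k\ge k_0$. For $l=1$ the argument is identical with $\psi_{n_2,\ldots,n_l}$ replaced by the identity on $[0,1]$. I expect the only delicate bookkeeping to be making the tail bound genuinely independent of $k$ — which it is, since it depends on $f$ only through $\|f\|\le K$ — and keeping track of the constant introduced by precomposing with the fixed branch $\psi_{n_2,\ldots,n_l}$; both are routine once the decomposition above is set up. $\qquad\blacksquare$
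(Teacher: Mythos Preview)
Your overall strategy coincides with the paper's: decompose $L_\beta(f)$ into the $f(0)\zeta_H$ term plus the series over inverse branches, control the tail $\sum_{n_1>M}$ uniformly in $k$ via the estimates behind Lemma~\ref{lemma: mayer operator inequalities}, and push the finite head $\sum_{n_1\le M}$ to zero using the hypothesis on each fixed cylinder. The tail bookkeeping you outline is correct.

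There is, however, one genuine gap. You twice invoke ``the convergence $\sup_{f\in\mathcal F_k}\|f\|_\infty\to 0$'' as though it were part of the hypotheses, but it is not: assumption~\eqref{equation: holder convergence on the cylinder sets} only gives convergence on each \emph{fixed} cylinder $\psi_{n_1,\ldots,n_l}([0,1))$, and the point $0$ lies in none of these images. In particular nothing you have written forces $|f(0)|\to 0$, so neither the $f(0)\zeta_H(2\beta,\cdot+1)$ term nor the passage from $f\circ\psi_{n_1,\ldots,n_l}$ to $f^\star\circ\psi_{n_1,\ldots,n_l}$ is yet under control. The paper isolates exactly this step as its first claim: pick a \emph{finite} family of cylinders whose union $A$ is $\varepsilon$-dense in $[0,1]$, use the hypothesis to make $\sup_{f\in\mathcal F_k}\sup_A|f|$ small for large $k$, and then use the uniform bound $\|f\|_\alpha\le K$ to propagate smallness from $A$ to all of $[0,1]$. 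Once you insert this short argument, your proof is complete and essentially identical to the paper's.
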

Applying the above proposition $l$ times, we obtain the following corollary.
\begin{corollary}\label{corollary: operator is compact}
    Let $l\in \mathbb{N}$ and let $\mathcal{F}_k$ be a collection of subsets of $C^\alpha([0,1])$ as in Proposition \ref{proposition: uniformising convergence}. Then $\lim_{k\to \infty}\sup_{f\in \mathcal{F}_k}\|L^l_{\beta}(f)\|= 0$.
\end{corollary}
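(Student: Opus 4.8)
The plan is to prove the corollary by induction on $l$, using Proposition \ref{proposition: uniformising convergence} to strip off one inverse branch of the Gauss map at each stage while keeping track of a uniform bound on the $C^\alpha$-norms of the families involved. The base case $l=1$ is exactly the final clause of Proposition \ref{proposition: uniformising convergence}.

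For the inductive step I would assume the corollary for $l-1$ and take a family $(\mathcal{F}_k)_k$ as in the hypothesis, so that $\|f\| \leq K$ for all $f \in \mathcal{F}_k$ and $\sup_{f\in\mathcal{F}_k}\|f\circ\psi_{n_1,\ldots,n_l}\| \to 0$ for every multi-index $(n_1,\ldots,n_l)$. I would then form the image families $\mathcal{G}_k := \{L_\beta(f) : f \in \mathcal{F}_k\}$ and check two things about them. First, that they are still uniformly bounded in $C^\alpha$: this follows from the continuity of $L_\beta$ on $C^\alpha([0,1])$ established in Lemma \ref{lemma: mayer operator inequalities}, the bound becoming $\|L_\beta\|\,K$. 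Second, that they satisfy the branch-convergence hypothesis with $l$ replaced by $l-1$: this is precisely the conclusion of Proposition \ref{proposition: uniformising convergence} in the case $l>1$, after relabelling $(n_2,\ldots,n_l)$ as a multi-index of length $l-1$. Applying the inductive hypothesis to $(\mathcal{G}_k)_k$ then yields $\sup_{g\in\mathcal{G}_k}\|L_\beta^{l-1}(g)\| \to 0$, and since this supremum equals $\sup_{f\in\mathcal{F}_k}\|L_\beta^{l-1}(L_\beta f)\| = \sup_{f\in\mathcal{F}_k}\|L_\beta^l(f)\|$, the induction closes.

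I do not anticipate a genuine obstacle: the argument is essentially bookkeeping of indices, and the only point requiring care is ensuring the uniform $C^\alpha$-bound is not lost along the iteration — this is guaranteed because $L_\beta$ is a fixed bounded operator, so the constant is merely multiplied by $\|L_\beta\|$ at each of the $l$ steps. One should of course keep $\beta$ fixed with $\Re(\beta) > (1-\alpha)/2$ and $\beta \neq 1/2$ throughout, so that Lemma \ref{lemma: mayer operator inequalities} remains applicable at every stage.
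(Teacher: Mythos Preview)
Your proposal is correct and is essentially the same as the paper's own argument, which consists of the single sentence ``Applying the above proposition $l$ times, we obtain the following corollary.'' You have simply spelled out the induction explicitly, including the observation that continuity of $L_\beta$ (Lemma~\ref{lemma: mayer operator inequalities}) preserves the uniform bound on the image families $\mathcal{G}_k$, which the paper leaves implicit.
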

The corollary is thus a way of converting a statement of the convergence of the family $\mathcal{F}_k$ of functions on individual sets of the form $\{[n_1,\ldots,n_l+x]: x\in [0,1]\}$ to convergence of $L_\beta^l(\mathcal{F}_k)$ with respect to the $\|\cdot\|$-norm. 
\begin{proof}[Proof of Proposition \ref{proposition: uniformising convergence}.]
Our first claim is that \eqref{equation: holder convergence on the cylinder sets} implies that $\lim_{n\to \infty}\sup_{f\in \mathcal{F}_n}\|f\|_\infty=0$. Let $\varepsilon>0$. Let $\mathcal{J}\subset \mathbb{N}^l$ be a finite collection of digits such that the set 
\[A:= \bigcup_{(n_1,\ldots, n_l)\in J}\langle n_1,\ldots,n_l\rangle\]
satisfies $d(x,A)\leq \varepsilon $ for all $x\in [0,1]$. By \eqref{equation: holder convergence on the cylinder sets} , there exists some $N\in\mathbb{N}$ for which $\abs{f(y)}\leq \varepsilon$ for all $y\in A$ and $f\in \mathcal{F}_k$ with $k\geq N$. For any $x\in [0,1]$, there exists a $y\in A$ with $d(y,x)\leq 2\varepsilon$. Hence by uniform boundedness of the Hölder norm,
\[ f(x)\leq \varepsilon + (2\varepsilon)^\alpha K .\]
Letting $\varepsilon\to 0$, we obtain the claim. It suffices therefore to show that $\lim_{k\to \infty}\sup_{f\in \mathcal{F}_k}\|f\circ \psi_{n_2,\ldots,n_l}\|_\alpha=0$. Assume for the moment that $l>1$.

We need uniform estimates on the Hölder seminorm of the function.
\begin{equation}\label{equation: cylinder sets for transfer operator}
    \begin{split}
        L_\beta(f)([n_2,n_3,\ldots,n_l+x])&= f(0)\zeta_H(2\beta,1+[n_2,n_3,\ldots,n_l+x])\\
        &+\sum_{n_1=1}^\infty [n_1,n_2,\ldots,n_l+x]^{2\beta}f^\star\left([n_1,n_2,\ldots,n_l+x]\right),
    \end{split}
\end{equation}
where $f^\star = f-f(0)$. Let $\varepsilon >0$ and note that there exists some $N_1\in \mathbb{N}$ depending only on $K,\beta$ and $\alpha$ such that \[\sup_{x\neq y}\abs{f(0)}\frac{\abs{\zeta_H(2\beta,1+[n_2,n_3,\ldots,n_l+y])-\zeta_H(2\beta,1+[n_2,n_3,\ldots,n_l+x])}}{\abs{x-y}^\alpha}\leq \varepsilon
\]
for all $f\in \mathcal{F}_k$ with $k\geq N_1$, since $\zeta_H$ is analytic and $\sup_{f\in \mathcal{F}_k}\abs{f(0)}\to 0$ as $k\to\infty$. We now estimate the contribution to the Hölder seminorm from the second term in \eqref{equation: cylinder sets for transfer operator}. For $x,y\in[0,1]$, denote $\hat{x}= [n_2,\ldots,n_l+x]$ and $\hat{y}=[n_2,\ldots,n_l+y]$. Let $M\in \mathbb{N}$ such that for all $f\in C^\alpha([0,1])$ with $\|f\|\leq K$:
\[
    \sum_{n=M}^\infty\abs{\frac{f\left(\frac{1}{n+\hat{x}}\right)}{(n+\hat{x})^{2\beta}}-\frac{f\left(\frac{1}{n+\hat{y}}\right)}{(n+\hat{y})^{2\beta}}} \leq \varepsilon \abs{\hat{x}-\hat{y}}^\alpha
\]
for all $x\neq y$. The existence of such an $M$ follows from uniform boundedness of the Hölder norm and the estimate \eqref{equation: summand for Hölder} we used in the proof of Lemma \ref{lemma: mayer operator inequalities}. Furthermore, it is a fairly straightforward exercise to show that \eqref{equation: holder convergence on the cylinder sets} implies that 
\[ \sup_{f\in\mathcal{F}_k}\sum_{n=1}^{M-1}\abs{\frac{f\left(\frac{1}{n+\hat{x}}\right)}{(n+\hat{x})^{2\beta}}-\frac{f\left(\frac{1}{n+\hat{y}}\right)}{(n+\hat{y})^{2\beta}}}\leq \varepsilon \abs{\hat{x}-\hat{y}}^\alpha \]
for all $f\in \mathcal{F}_k$ with $k$ greater than some $N_2\in\mathbb{N}$ and $x,y\in[0,1]$ with $x\neq y$. We obtain by \eqref{equation: cylinder sets for transfer operator} and the above three equations that 
\[\frac{\abs{L(f)([n_2,n_3,\ldots,n_l+y])-L(f)([n_2,n_3,\ldots,n_l+x])} }{\abs{[n_2,n_3,\ldots,n_l+y]-[n_2,n_3,\ldots,n_l+x]}^\alpha}\leq 3\varepsilon\]
for all $f\in \mathcal{F}_k$ with $k>\max\{N_1,N_2\}$. The lemma for $l>1$ follows from letting $\varepsilon\to 0$. For the case $l=1$, replace $[n_2,n_3,\ldots,n_l+x]$ , $[n_2,n_3,\ldots,n_l+y]$ and $[n_1,n_2,\ldots,n_l+x]$ with $x,y$ and $1/(n_1+x)$ respectively. 

\end{proof}

\begin{proposition}\label{proposition: compact operator}
    The operator $L_{\beta}^l \circ P_{l,N}$ is compact for each $l\in\mathbb{N}$ and $N\in\mathbb{N}$.
\end{proposition}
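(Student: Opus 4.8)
My plan is to derive compactness directly from Corollary~\ref{corollary: operator is compact} by means of a diagonal extraction, using the completeness of $C^\alpha([0,1])$. Fix $l,N\in\mathbb{N}$ and let $(g_k)_{k\in\mathbb{N}}$ be an arbitrary bounded sequence in $C^\alpha([0,1])$, say $\|g_k\|\le M$, and set $f_k:=P_{l,N}(g_k)$; since $P_{l,N}$ is a bounded operator (Lemma~\ref{lemma: well-defined}), the $f_k$ are again uniformly bounded in $C^\alpha([0,1])$. Because $\widetilde{\mathcal{P}_{l,N}}$ is countable, a standard diagonal argument produces a subsequence, still denoted $(g_k)$, such that $\bigl(g_k(p)\bigr)_k$ is Cauchy for every $p\in\widetilde{\mathcal{P}_{l,N}}$. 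I claim that the image sequence $(L_\beta^l f_k)_k$ is then Cauchy in $C^\alpha([0,1])$; as $C^\alpha([0,1])$ is a Banach space it converges, and since the bounded sequence was arbitrary this shows $L_\beta^l\circ P_{l,N}$ is compact.

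The crucial structural observation is that for every word $(n_1,\ldots,n_l)$ the set $\widetilde{\mathcal{P}_{l,N}}\cap\langle n_1,\ldots,n_l\rangle$ is \emph{finite}. Indeed, the accumulation points of $\widetilde{\mathcal{P}_{l,N}}$ are exactly the rationals $[m_1,\ldots,m_{l'}]$ with $l'<l$, each of which is an endpoint of the depth-$l'$ cylinder $\langle m_1,\ldots,m_{l'}\rangle$; a short analysis of the nesting of cylinders shows that whenever $\langle m_1,\ldots,m_{l'}\rangle$ meets $\langle n_1,\ldots,n_l\rangle$ it does so either inside the relative interior of $\langle m_1,\ldots,m_{l'}\rangle$ (if $(m_1,\ldots,m_{l'})$ is a prefix of $(n_1,\ldots,n_l)$) or in a single boundary point, so no accumulation point of $\widetilde{\mathcal{P}_{l,N}}$ lies in the relative interior of $\langle n_1,\ldots,n_l\rangle$, and the intersection, being an accumulation-point-free subset of a compact interval, is finite. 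Consequently $P_{l,N}$ restricted to $\langle n_1,\ldots,n_l\rangle$ is a genuine finite interpolation: writing $\psi:=\psi_{n_1,\ldots,n_l}$ and letting $0=x_0<x_1<\cdots<x_r=1$ be the $\psi$-preimages of $\widetilde{\mathcal{P}_{l,N}}\cap\langle n_1,\ldots,n_l\rangle$, for every $\phi\in C^\alpha([0,1])$ and $x\in[x_i,x_{i+1}]$ one has
\[(P_{l,N}(\phi)\circ\psi)(x)=\bigl(1-t_i(x)\bigr)\,\phi(\psi(x_i))+t_i(x)\,\phi(\psi(x_{i+1})),\qquad t_i(x):=\frac{\psi(x)-\psi(x_i)}{\psi(x_{i+1})-\psi(x_i)}\in[0,1],\]
where the nodes $x_i$ and the real-analytic weights $t_i$ depend on $(n_1,\ldots,n_l)$ and $N$ but not on $\phi$. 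Since each $t_i$ is $C^1$ on the fixed compact interval $[x_i,x_{i+1}]$, gluing over the finitely many subintervals yields a constant $C_{n_1,\ldots,n_l,N}$ with $\bigl\|P_{l,N}(\phi)\circ\psi_{n_1,\ldots,n_l}\bigr\|\le C_{n_1,\ldots,n_l,N}\max_{0\le i\le r}\bigl|\phi(\psi(x_i))\bigr|$ for all $\phi$, each $\psi(x_i)$ lying in $\widetilde{\mathcal{P}_{l,N}}$.

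To conclude, for $j\in\mathbb{N}$ put $\mathcal{F}_j:=\{P_{l,N}(g_{k'}-g_{k''}):k',k''\ge j\}$; by linearity this is the set of differences $P_{l,N}(g_{k'})-P_{l,N}(g_{k''})$ with $k',k''\ge j$, and each element has $C^\alpha$-norm at most $2M\|P_{l,N}\|$, so the family is uniformly bounded. Applying the estimate of the previous paragraph to $\phi=g_{k'}-g_{k''}$ bounds $\bigl\|(P_{l,N}(g_{k'})-P_{l,N}(g_{k''}))\circ\psi_{n_1,\ldots,n_l}\bigr\|$ by $C_{n_1,\ldots,n_l,N}$ times a maximum of $|g_{k'}(p)-g_{k''}(p)|$ over finitely many fixed points $p\in\widetilde{\mathcal{P}_{l,N}}$, so $\lim_{j\to\infty}\sup_{h\in\mathcal{F}_j}\bigl\|h\circ\psi_{n_1,\ldots,n_l}\bigr\|=0$ for every word $(n_1,\ldots,n_l)$. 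Thus $(\mathcal{F}_j)_j$ satisfies the hypotheses of Proposition~\ref{proposition: uniformising convergence}, and Corollary~\ref{corollary: operator is compact} gives $\lim_{j\to\infty}\sup_{k',k''\ge j}\bigl\|L_\beta^l(P_{l,N}(g_{k'}))-L_\beta^l(P_{l,N}(g_{k''}))\bigr\|=0$ by linearity of $L_\beta$ and $P_{l,N}$, i.e.\ $(L_\beta^l f_k)_k$ is Cauchy, completing the argument. The one step that needs genuine care is the finiteness of $\widetilde{\mathcal{P}_{l,N}}\cap\langle n_1,\ldots,n_l\rangle$ together with the bookkeeping of the continued-fraction/cylinder combinatorics; after that the proof is a routine diagonal argument feeding into Corollary~\ref{corollary: operator is compact}.
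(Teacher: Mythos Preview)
Your argument is correct and follows essentially the same route as the paper: a diagonal extraction on the countable set $\widetilde{\mathcal{P}_{l,N}}$, the observation that $\widetilde{\mathcal{P}_{l,N}}\cap\langle n_1,\ldots,n_l\rangle$ is finite so that $P_{l,N}$ is a genuine finite interpolation on each depth-$l$ cylinder, and then feeding the resulting difference families $\mathcal{F}_j$ into Corollary~\ref{corollary: operator is compact}. The only cosmetic differences are that the paper enumerates the points of $\widetilde{\mathcal{P}_{l,N}}\cap I$ explicitly rather than arguing via accumulation points, and that it phrases the Cauchy property on the cylinder directly instead of via your node-value estimate; the substance is identical.
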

\begin{proof}
    Let $B_{C^\alpha}$ denote the set of functions with $\|f\|<1$. We shall prove $(L_{\beta}^l \circ P_{l,N})(B_{C^\alpha})$ is precompact by showing that any sequence in the set has a Cauchy subsequence. 

    Let $g_n$ be a sequence of functions in $(L_{\beta}^l \circ P_{l,N})(B_{C^\alpha})$ and let $f_n$ be a sequence of functions in $B_{C^\alpha}$ with $(L_{\beta}^l \circ P_{l,N})(f_n)= g_n$ for all $n$. For any $x\in  \widetilde{\mathcal{P}_{l,N}}$ there exists a subsequence $f_{m_k}$ such that $f_{m_k}(x)$ is a Cauchy sequence. By a diagonal argument, we may assume $f_{m_k}(x)$ is Cauchy for all $x\in  \widetilde{\mathcal{P}_{l,N}}$. 

    For any $n_1,\ldots,n_l\in \mathbb{N}$, let $I$ be the closed interval spanned by $[n_1,\ldots,n_l]$ and $[n_1,\ldots,n_l+1]$. The intersection $I\cap \widetilde{\mathcal{P}_{l,N}}$ is finite. Indeed, it consists of the endpoints, the points $[n_1,n_2,\ldots,n_l+k/N]$ with $k\in\{0,\ldots,N-1\}$ and the points $[n_1,n_2,\ldots,n_l'+p/N]$ with  $l'<l$ and $p/N \in \langle n_{l'+1},\ldots,n_l \rangle$.

    Since the functions $P_{l,N}(f_{m_k})$ restricted to $I$ are piecewise linear functions interpolated at a finite number of points $x$ for which $f_{m_k}(x)$ is Cauchy, the functions $\rstr{P_{l,N}(f_{m_k})}{I}$ form a Cauchy sequence in $C^\alpha(I)$. Since $\psi_{n_1,\ldots,n_l}$ is analytic on a neighbourhood of $[0,1]$, it follows that $P_{l,N}(f_{m_k})\circ\psi_{n_1,\ldots,n_l}$ is Cauchy on $C^\alpha([0,1])$. In particular, the families of functions defined by
    \[\mathcal{F}_k = \{P_{l,N}(f_{m_r})-P_{l,N}(f_{m_s}): r,s\geq k\} \]
    satisfy the conditions of Proposition \ref{proposition: uniformising convergence}. By Corollary \ref{corollary: operator is compact}, we thus obtain that
    \[\lim_{k\to \infty}\sup_{r,s\geq k}\left\|(L_{\beta}^l \circ P_{l,N})(f_{m_r}) -(L_{\beta}^l \circ P_{l,N})(f_{m_s})\right\|= 0\]
    which shows $g_{m_k}$ is a Cauchy subsequence.
\end{proof}

Let us now prove most of Theorem \ref{theorem: radius of essential spectrum}. 
\begin{theorem}\label{theorem: spectral theorem}
Let $0<\alpha<1 $. Then for all $\beta$ with $\Re(\beta)>(1-\alpha)/2$ the transfer operator $L_\beta$ satisfies 
\[\rho_e(L_\beta) \leq  \rho(\mathfrak{L}_{\Re(\beta)+\alpha}).\]
\end{theorem}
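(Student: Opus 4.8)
The plan is to use Nussbaum's formula \eqref{equation: Nussbaum} together with the compact operators $L_\beta^l \circ P_{l,N}$ constructed in Proposition \ref{proposition: compact operator}. Fix $l$ and $N$; then $\rho_e(L_\beta)^l = \rho_e(L_\beta^l) \leq \|L_\beta^l\|_K \leq \|L_\beta^l - L_\beta^l \circ P_{l,N}\| = \|L_\beta^l \circ (I - P_{l,N})\|$. So the whole theorem reduces to estimating the operator norm of $L_\beta^l \circ (I - P_{l,N})$ on $C^\alpha([0,1])$ and showing that, after taking $N \to \infty$ and then $l \to \infty$, the $l$-th root of this quantity is bounded by $\rho(\mathfrak{L}_{\Re(\beta)+\alpha})$.

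First I would analyse $f - P_{l,N}(f)$. On each gap $(a,b)$ of $\widetilde{\mathcal{P}_{l,N}}$, the difference $f - P_{l,N}(f)$ vanishes at the endpoints and has $\|\cdot\|_\infty$ on that gap bounded by $\|f\|_\alpha (b-a)^\alpha$, while its Hölder seminorm on $[0,1]$ is controlled by $\|f\|_\alpha$ (roughly $2\|f\|_\alpha$, since the piecewise-linear interpolant of an $\alpha$-Hölder function is again $\alpha$-Hölder with comparable seminorm). Crucially, the gaps of $\widetilde{\mathcal{P}_{l,N}}$ lying inside a cylinder $\langle n_1,\ldots,n_l\rangle$ have length $O(N^{-1})$ times the length of that cylinder, which is $O(\prod (n_i)^{-2})$; so $\|(f - P_{l,N}(f))\circ \psi_{n_1,\ldots,n_l}\|_\infty \to 0$ uniformly over $\|f\|\le 1$ as $N\to\infty$. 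I would then iterate Lemma \ref{lemma: mayer operator inequalities}: applying it $l$ times to $g := f - P_{l,N}(f)$ gives
\[
\lvert L_\beta^l(g)\rvert_\alpha(x) \;\leq\; \mathfrak{L}_{\Re(\beta)+\alpha}^l\!\big((1+\varepsilon)^l \lvert g\rvert_\alpha\big)(x) \;+\; (\text{error terms involving } \|\cdot\|_\infty),
\]
where the error terms, being built from finitely many applications of rank-finite/bounded pieces and the $L^\infty$ norms of intermediate iterates, tend to $0$ as $N\to\infty$ for fixed $l,\varepsilon$ by the gap-length estimate above (this is essentially the content of Corollary \ref{corollary: operator is compact}, applied to the family $\mathcal{F}_N = \{f - P_{l,N}(f): \|f\|\le 1\}$). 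Since $\lvert g \rvert_\alpha$ is a bounded function with $\|\,\lvert g\rvert_\alpha\|_\infty = \|g\|_\alpha \leq 2$, and $\mathfrak{L}_{\Re(\beta)+\alpha}$ is a positive operator on $L^\infty$, we get $\|\mathfrak{L}_{\Re(\beta)+\alpha}^l\lvert g\rvert_\alpha\|_\infty \leq 2\,\|\mathfrak{L}_{\Re(\beta)+\alpha}^l\|$, and similarly the $\|\cdot\|_\infty$ part of $\|L_\beta^l g\|$ is dominated by $\|\mathfrak{L}_{\Re(\beta)+\alpha}^l\| + o_{N}(1)$. Hence $\limsup_{N\to\infty}\|L_\beta^l\circ(I-P_{l,N})\| \leq C_l\,\|\mathfrak{L}_{\Re(\beta)+\alpha}^l\|$ for a constant $C_l$ that is polynomial (not exponential) in $l$ once $\varepsilon$ is chosen as, say, $1/l$.

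Taking $l$-th roots, $\rho_e(L_\beta) \leq \lim_{l\to\infty} (C_l)^{1/l}\,\|\mathfrak{L}_{\Re(\beta)+\alpha}^l\|^{1/l} = \rho(\mathfrak{L}_{\Re(\beta)+\alpha})$, the last equality by the spectral radius formula for the bounded operator $\mathfrak{L}_{\Re(\beta)+\alpha}$ on $L^\infty([0,1])$ and $(C_l)^{1/l}\to 1$. The main obstacle I anticipate is bookkeeping the iterated error terms: Lemma \ref{lemma: mayer operator inequalities} produces, at each of the $l$ steps, a new $D_\varepsilon\|\cdot\|_\infty$ contribution, and one must check that all of these collapse to $0$ as $N\to\infty$ (for fixed $l$) rather than merely staying bounded — this is where the ``countable interpolation'' design of $P_{l,N}$, guaranteeing $L_\beta^l\circ P_{l,N}(f)$ stays a nonlinear interpolant of $L_\beta^l f$ on the right node sets, is used, via Proposition \ref{proposition: uniformising convergence} / Corollary \ref{corollary: operator is compact}. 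A secondary subtlety is that $\mathfrak{L}_{\Re(\beta)+\alpha}$ acts on $L^\infty$ while $L_\beta$ acts on $C^\alpha$, so one must be careful that the quantity $\lvert g\rvert_\alpha$ appearing inside $\mathfrak{L}$ is genuinely in $L^\infty$ with the stated sup-norm bound, and that positivity of $\mathfrak{L}_{\Re(\beta)+\alpha}$ legitimately lets us replace $(1+\varepsilon)^l\lvert g\rvert_\alpha$ by the constant $2(1+\varepsilon)^l$ when passing to norms.
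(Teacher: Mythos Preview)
Your overall strategy is exactly the paper's: use Nussbaum's formula with the compact approximants $L_\beta^l\circ P_{l,N}$ from Proposition \ref{proposition: compact operator}, set $g_{l,N}=f-P_{l,N}f$, iterate the second inequality of Lemma \ref{lemma: mayer operator inequalities}, and kill the $D_\varepsilon\|\cdot\|_\infty$ error terms by sending $N\to\infty$ before taking $l$-th roots. The main term is handled identically; the paper simply lets $\varepsilon\to0$ after $N\to\infty$ (legal because $D_\varepsilon$ only multiplies terms that have already vanished), obtaining a constant $C''=E+1$ independent of $l$ rather than your $C_l=(1+1/l)^l\cdot(\text{const})$. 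Either way the $l$-th root is harmless.

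There is one genuine gap, however: your justification for why the intermediate sup-norms $\|L_\beta^{l'}(g_{l,N})\|_\infty$ tend to $0$ does not go through as written. You invoke Corollary \ref{corollary: operator is compact} applied to $\mathcal{F}_N=\{f-P_{l,N}f:\|f\|\le1\}$, but the hypothesis \eqref{equation: holder convergence on the cylinder sets} of Proposition \ref{proposition: uniformising convergence} requires the \emph{full} $C^\alpha$-norm of $(f-P_{l,N}f)\circ\psi_{n_1,\ldots,n_l}$ to vanish, and the H\"older seminorm part does not---it stays of order $\|f\|_\alpha$. (Indeed, if the corollary applied you would conclude $\|L_\beta^l(g_{l,N})\|\to0$, forcing $\rho_e(L_\beta)=0$.) Your gap-length estimate gives only sup-norm decay on cylinders, which is not what Proposition \ref{proposition: uniformising convergence} asks for.

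The fix is the observation you allude to but do not exploit directly: because $g_{l,N}$ vanishes on every $\mathcal{P}_{k,N}$ with $k\le l$, one has $L_\beta(g_{l,N})(x)=\sum_n (n+x)^{-2\beta}g_{l,N}(1/(n+x))$ (no Hurwitz term, since $g_{l,N}(0)=0$), and this vanishes on $\widetilde{\mathcal{P}_{l-1,N}}$; iterating, $L_\beta^{l'}(g_{l,N})$ vanishes on $\widetilde{\mathcal{P}_{l-l',N}}$ for every $l'\le l$. Since the mesh of $\widetilde{\mathcal{P}_{l-l',N}}$ is at most $1/N$, H\"older continuity gives $\|L_\beta^{l'}(g_{l,N})\|_\infty\le N^{-\alpha}\|L_\beta^{l'}(g_{l,N})\|_\alpha\le N^{-\alpha}(E+1)\|L_\beta^{l'}\|\to0$ uniformly over $\|f\|\le1$. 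This is precisely the paper's ``claim'' inside the proof, and it replaces your appeal to Corollary \ref{corollary: operator is compact}. With this in hand, the iterated application of Lemma \ref{lemma: mayer operator inequalities} goes through exactly as you describe.
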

\begin{proof}
    By Nussbaum's formula and Proposition \ref{proposition: compact operator}, it suffices to prove that there is some constant $C''>0$ for which \[\limsup_{N\to\infty} \|L_{\beta}^l -L_{\beta}^l \circ P_{l,N} \| \leq C''\left\|\mathfrak{L}_{\Re(\beta)+\alpha}^l\right\| \text{ for all } l.\]

    We show in Lemma \ref{lemma: well-defined} in the appendix that there exists a constant $E>$ for which $\|P_{l,N}\|\leq E$ for all $l,N$. 

    For fixed $l$, we first claim that
    \[ \|L_{\beta}^{l'} -L_{\beta}^{l'} \circ P_{l,N} \|_\infty \to 0 \text{ as } n\to \infty \]
    for all $l'\leq l$. Let $f\in C^\alpha([0,1])$. For notational simplicity, let $g_{l,N}:= f- P_{l,N}f$, which by the aforementioned lemma satisfies $\|g_{l,N}\|\leq (E+1)\|f\|$ for all $l,N$. Since $g_{l,N}(0)=0$ we have that 
    \[L_{\beta}(g_{l,N})(x)= \sum_{n=1}^\infty \frac{1}{(n+x)^{2\beta}}(g_{l,N})\left(\frac{1}{n+x}\right).\]
    It follows from the above formula that since $g_{l,N}(x)=0$ for all $x\in \mathcal{P}_{k,N}$ with $k\leq l$, the equality $L_\beta(g_{l,N})(x)=0$ holds for all $x\in \mathcal{P}_{l-1,N}$. By repeating this argument, we obtain that $L_\beta^{l'}(g_{l,N})(x)=0$ for all $x\in \widetilde{\mathcal{P}_{l-l',N}}$. By definition of the $\|\cdot\|_\alpha$-seminorm, we obtain the inequality $\|L_\beta^{l'}(g_{l,N})\|_\infty \leq \lambda^{\alpha}\| L_\beta^{l'}(g_{l,N})\|_\alpha$, where $\lambda$ is the maximal distance of a point $x$ to $\widetilde{\mathcal{P}_{l-l',N}}$. We thus obtain 
    \[\|(L_{\beta}^{l'} -L_{\beta}^{l'} \circ P_{l,N})(f)\|_\infty= \|L_\beta^{l'}(g_{l,N})\|_\infty\leq (E+1)\lambda^{\alpha}\| L_\beta^{l'}\|\|f\| .\]
    Letting $N\to\infty$ proves the claim. 

    To show the theorem use the claim for $l=l'$ and Lemma \ref{lemma: mayer operator inequalities} to obtain for any $\varepsilon>0$ that
    \[\limsup_{N\to\infty }\left\|L_{\beta}^{l} -L_{\beta}^{l} \circ P_{l,N} \right\|\leq \limsup_{N\to\infty }\sup_{\substack{f\in C^{\alpha}([0,1]) \\ \|f\|\leq 1}}\left\|  \mathfrak{L}_{\Re(\beta)+\alpha}\left((1+\varepsilon)\abs{L_{\beta}^{l-1}(g_{l,N})}_\alpha\right) \right\|_\infty .\]
    Since $\varepsilon$ is arbitrary, the above also holds for $\varepsilon=0$. 
    Applying the claim for $l'= l-1,l-2,\ldots,0$ we obtain once again by Lemma \ref{lemma: mayer operator inequalities} that 
    \[\limsup_{N\to\infty }\left\|L_{\beta}^{l} -L_{\beta}^{l} \circ P_{l,N} \right\|\leq\limsup_{N\to\infty }\sup_{\substack{f\in C^{\alpha}([0,1]) \\ \|f\|\leq 1}}\left\|  \mathfrak{L}_{\Re(\beta)+\alpha}^l\left(\abs{g_{l,N}}_\alpha\right) \right\|_\infty  \leq (E+1)\left\| \mathfrak{L}_{\Re(\beta)+\alpha}^l \right\|.\]
    Letting $C''=E+1$ proves the theorem. 
    
    \end{proof}
\begin{remark}\label{remark: spectral radius}
By using the spectral radius formula $\rho(\mathfrak{L}_{\Re(\beta)+\alpha})=\limsup_{l\to\infty}\|\mathfrak{L}_{\Re(\beta)+\alpha}^l\|^{1/l}$ and bounding every function $f\in L^\infty([0,1])$ by a constant, we obtain that $\rho(\mathfrak{L}_{\Re(\beta)+\alpha})\leq \rho(\mathcal{L}_{\Re(\beta)+\alpha})$ and hence that 
\[\rho(\mathfrak{L}_{\Re(\beta)+\alpha}) = \lambda_1(\Re(\beta)+\alpha),\]
where $\lambda_1(t)$ for $t>1/2$ is the eigenvalue of maximum modulus of $\mathcal{L}_{t}$. The eigenvalue is guaranteed to be simple and positive by the Perron-Frobenius theorem. In fact $\lambda_1(t)$ is a real-analytic and decreasing function of $t$ with $\lambda(1)=1$, see \cite{Mayer90,Vallee98}. 
\end{remark}
Theorem \ref{theorem: radius of essential spectrum} for $\alpha \in (0,1)$ then follows from Remark \ref{remark: spectral radius} and from the following corollary of Theorem \ref{theorem: spectral theorem}.
\begin{corollary}\label{Corollary: Ruelle}
    Let $\lambda \in \mathbb{C}$ satisfy $\abs{\lambda}>\rho(\mathfrak{L}_{\Re(\beta)+\alpha})$. Then $\lambda$ is an  eigenvalue of $L_\beta$ if and only if it is an eigenvalue of $\mathcal{L}_\beta$. In this case the respective eigenspaces $E_\lambda^\alpha$ and $E_\lambda^\omega$ are equal. 
\end{corollary}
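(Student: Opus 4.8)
The plan is to prove the two implications separately, using throughout the restriction map $\iota\colon A_\infty(D)\to C^\alpha([0,1])$, $g\mapsto g|_{[0,1]}$. Since $[0,1]$ is a compact subset of $D$, Cauchy estimates show $\iota$ is a bounded injection (injectivity is the identity theorem, as $[0,1]$ has accumulation points in $D$), and since $\mathcal L_\beta$ and $L_\beta$ are given by the same series \eqref{equation: Mayer transfer extended2} and $\mathcal L_\beta$ maps $A_\infty(D)$ into itself, we get the intertwining $\iota\circ\mathcal L_\beta=L_\beta\circ\iota$. The easy inclusion ``$\iota(E_\lambda^\omega)\subseteq E_\lambda^\alpha$'', and the fact that every eigenvalue of $\mathcal L_\beta$ is one of $L_\beta$, follow at once; applying the same reasoning to $(L_\beta-\lambda)^r$ gives $\iota(N_\lambda^\omega)\subseteq N_\lambda^\alpha$ on the level of generalised eigenspaces.

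For the converse, fix $\lambda$ with $|\lambda|>\rho(\mathfrak L_{\Re(\beta)+\alpha})=:r$. By Theorem \ref{theorem: spectral theorem} and the definition of the essential spectrum, $\lambda$ is isolated in $\sigma(L_\beta)$, the generalised eigenspace $N_\lambda^\alpha$ is finite-dimensional, and $L_\beta$ restricts to an invertible operator on it. It therefore suffices to show $N_\lambda^\alpha\subseteq\iota\big(A_\infty(D)\big)$: once this is known, for $f\in N_\lambda^\alpha$ write $f=\iota(g)$ with $g\in A_\infty(D)$; then $(L_\beta-\lambda)^rf=\iota\big((\mathcal L_\beta-\lambda)^rg\big)=0$ and injectivity of $\iota$ force $g\in N_\lambda^\omega$, so $N_\lambda^\alpha=\iota(N_\lambda^\omega)$; intersecting with $\ker(\cdot-\lambda)$ gives $E_\lambda^\alpha=\iota(E_\lambda^\omega)$ and hence the equivalence of eigenvalues (both spaces are zero or both are non-zero).

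To establish $N_\lambda^\alpha\subseteq\iota(A_\infty(D))$ I would take $f\in N_\lambda^\alpha$ (the case of an honest eigenfunction first, the general one picking up only an innocuous polynomial-in-$l$ factor) and iterate the eigenvalue relation. Expanding $L_\beta^l$ as a sum over the level-$l$ inverse branches, bookkeeping carefully the finitely many analytic Hurwitz-zeta ``boundary'' terms produced at each step as in \eqref{equation: Mayer transfer extended2}, and then freezing the argument of $f$ at the base point $\psi_{n_1,\ldots,n_l}(0)$ of each cylinder, one obtains on $[0,1]$ a decomposition
\[
  \lambda^l f \;=\; C_l \;+\; R_l,\qquad
  C_l(z)=A_l(z)+\sum_{n_1,\ldots,n_l}w_{n_1,\ldots,n_l}(z)\,f\big(\psi_{n_1,\ldots,n_l}(0)\big),
\]
where $A_l$ and the weights $w_{n_1,\ldots,n_l}$ are holomorphic on $D$ (so $C_l\in\iota(A_\infty(D))$ after checking locally uniform convergence of the series), while the remainder involves only the Hölder modulus of $f$ across the level-$l$ cylinders: using Lemma \ref{lemma: mayer operator inequalities} and bounded distortion of the branches one gets $\|R_l\|_{C^\alpha([0,1])}\le C\,\|\mathfrak L_{\Re(\beta)+\alpha}^{\,l}\|\,\|f\|$. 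Since $\|\mathfrak L_{\Re(\beta)+\alpha}^{\,l}\|^{1/l}\to r<|\lambda|$ by Remark \ref{remark: spectral radius}, we get $\lambda^{-l}R_l\to0$ in $C^\alpha([0,1])$, hence $\lambda^{-l}C_l\to f$ there.

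The main obstacle — and the genuinely new point compared with Theorem \ref{theorem: spectral theorem} — is to upgrade this approximation to an actual holomorphic extension of $f$ to $D$. Convergence on $[0,1]$ is not enough by itself (a $C^\alpha$-limit of restrictions of analytic functions need not be analytic), and the crude operator-norm bound $\|C_l\|_{L^\infty(D)}\lesssim\rho(\mathcal L_{\Re(\beta)})^l$ only beats $|\lambda|^l$ when $|\lambda|>\lambda_1(\Re(\beta))=\rho(\mathcal L_{\Re(\beta)})$, which excludes precisely the interesting range $r<|\lambda|\le\lambda_1(\Re(\beta))$; so a naive Vitali/Montel argument on $D$ fails. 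Here I would instead use the identity $\lambda f=L_\beta f$ — now known for the bounded Hölder function $f$ — via Lewis's correspondence with the three-term functional equation, together with the Hölder analogue of the Lewis–Zagier bootstrapping theorem (the material surrounding Section \ref{section: three-term functional equation}): a bounded Hölder solution of the three-term equation extends automatically to a holomorphic function on $\mathbb C\setminus(-\infty,-1]$, in particular on $D$. Once analyticity on a neighbourhood of $[0,1]$ is in hand it can also be enlarged to $D$ by iterating $f\mapsto\lambda^{-1}L_\beta f$ along the contracting branches. One finally checks by uniqueness of analytic continuation that the extension $g$ satisfies $\mathcal L_\beta g=\lambda g$ (and the corresponding higher relation for generalised eigenfunctions), which gives $N_\lambda^\alpha\subseteq\iota(A_\infty(D))$ and completes the proof.
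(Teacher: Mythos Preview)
Your proposal has a genuine gap: the fallback step is circular. You correctly identify that the decomposition $\lambda^l f=C_l+R_l$ fails to produce analyticity in the interesting range $r<|\lambda|\le\lambda_1(\Re(\beta))$, and you then propose to invoke ``the H\"older analogue of the Lewis--Zagier bootstrapping theorem (the material surrounding Section~\ref{section: three-term functional equation})''. But that bootstrapping statement---that a H\"older solution of the three-term equation with the right asymptotics is automatically holomorphic on the cut plane---is in this paper a \emph{consequence} of Theorem~\ref{theorem: radius of essential spectrum}, and Theorem~\ref{theorem: radius of essential spectrum} in turn relies on Corollary~\ref{Corollary: Ruelle}. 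The passage after Proposition~\ref{equation: proposition: } makes the dependence explicit: ``the condition $L_\beta f=\lambda f$, which by Theorem~\ref{theorem: radius of essential spectrum} implies $f$ extends to a holomorphic function\ldots''. So invoking Section~\ref{section: three-term functional equation} here assumes exactly what you are trying to prove. The original Lewis--Zagier bootstrapping is of no help either, since it requires real-analyticity as input.

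The paper circumvents the analyticity obstacle entirely by passing to the \emph{dual}. Since $\lambda\notin\sigma_e$ on both sides, $(\lambda I-L_\beta)^r$ and $(\lambda I-\mathcal L_\beta)^r$ are Fredholm of index~$0$, so it suffices to match $\dim E_{L_\beta^*}(\lambda)$ and $\dim E_{\mathcal L_\beta^*}(\lambda)$. The restriction $\iota^*\colon(C^\alpha)^*\to A_\infty(D)^*$ carries one generalised eigenspace to the other; surjectivity is Hahn--Banach, and injectivity is obtained by choosing $\alpha'<\alpha$ still satisfying $|\lambda|>\rho(\mathcal L_{\Re(\beta)+\alpha'})$, extending the functional to $C^{\alpha'}$, and using that restrictions of functions in $A_\infty(D)$ are dense in $C^{\alpha'}$. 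This completely avoids having to show directly that an individual H\"older eigenfunction is analytic; the analyticity of eigenfunctions then falls out \emph{a posteriori} from the equality of dimensions and the inclusion $\iota(E_\lambda^\omega)\subseteq E_\lambda^\alpha$ you already established.
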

This is an example of a more general principle for operators on Banach spaces, see e.g. \cite{Grabiner}. For our purposes, we may directly adapt a proof due to Ruelle (Corollary 3.3 in \cite{Ruelle89}), which we provide in the appendix. 

\begin{proof}[Proof of Theorem \ref{theorem: radius of essential spectrum}]

Let $k:=\lfloor \alpha \rfloor$ and let $\eta = \alpha -\lfloor \alpha \rfloor$. We first assume $\eta>0$ and adapt a standard trick (see e.g. \cite{ColletIsola} III) to reduce to the problem on $C^\eta([0,1])$. We note that for $\Re(\beta)>>1$ we obtain by first applying the product rule for higher derivatives and then the chain rule that there exists coefficients $c_r$ with $c_0=1$ for which 
\begin{equation*}
    \begin{split}
        \frac{d^k L_\beta(f)}{dz^k}(z) =
         \sum_{r=0}^k c_r L_{\beta+r/2+(k-r)}\left(\frac{d^{k-r}f}{dz^{k-r}}\right)(z).
    \end{split}
\end{equation*}
This holds using analytic continuation to $\Re(\beta)>(1-k)/2$. 
Recall that the any composition of operators is compact if one of them is compact and that the operator $C^\alpha([0,1]) \to C^\eta([0,1])$ which sends $f$ to $d^{k-r}f/dz^{k-r}$ is compact for $r>0$. Hence we can write 
\[\frac{d^k L_\beta(f)}{dz^k} = L_{\beta+k}\left(\frac{d^k L_\beta(f)}{dz^k}\right) +\mathcal{K}(f),\]
where $\mathcal{K}$ is some compact operator from $C^\alpha([0,1]) \to C^\eta([0,1])$. Since the essential spectral radius does not depend on compact perturbations, we have that the essential spectral radius of $L_{\beta}$ as an operator on $C^\alpha([0,1])$ is the same as the essential spectral radius of $L_{\beta+k}$ as an operator on $C^\eta([0,1])$. This follows from \eqref{equation: Nussbaum}, but is perhaps easier to see using the Banach space isomorphism $C^\alpha([0,1])\to \mathbb{R}\times C^\eta([0,1]): f\mapsto \left(\left(\frac{d^kf}{dz^r}(0)\right)_{r=0}^{k-1},\frac{d^kf}{dz^k}\right) $. The case $\eta=0$ follows \textit{mutatis mutandis}, replacing $C^\eta([0,1])$ with $C([0,1])$. Hence Theorem \ref{theorem: radius of essential spectrum} for $\alpha>1$ follows from the case $\alpha\in (0,1) $.
\end{proof}

\begin{remark}\label{remark: abstract}
    The zeroes of $Z(s)$ on the line $s=1/2+it$ are those for which there are eigenvalues of $\Delta$ called \enquote{Maass cusp forms} with eigenvalue $s(1-s)$. Furthermore the multiplicity of the zeroes and the eigenspaces of $\Delta$ match. The other zeroes of $Z(s)$ are a simple zero at $s=1$ and at the values of $s$ for which $2s$ is a nontrivial zero of the Riemann zeta function $\zeta$. Since $\rho(\mathcal{L}_a)<1$ when $a>1$, we see that the eigenspaces of $\mathcal{L}_\beta$ and $L_\beta$ for the eigenvalues $\lambda=\pm 1$ are identical when $1>\alpha>1-\Re(\beta)$. In particular, the eigenspaces associated to the Maass cusp forms are preserved when $\alpha >1/2$, and the eigenspaces associated to the nontrivial zeroes of the Riemann zeta function\footnote{In fact, the zeroes of $Z(s)$ associated with the nontrivial zeroes of $\zeta$ always occur due to $1$ being an eigenvalue of $\mathcal{L}_\beta$.} to the right of the critical line are preserved when $\alpha=3/4$.
\end{remark}

We conclude with a fairly straightforward remark which will make the next section easier from a notational perspective. 
\begin{remark}\label{remark: extending L_beta}
    We can of course consider the operator $L_\beta$ acting on spaces other than $A_\infty(D)$ or $C^\alpha([0,1])$. For example, it is clear that the estimates on the essential spectrum still hold for $L_\beta$ acting on $C^\alpha([a,M])$ with $0<a\leq 1$ and $M\geq(1+a)^{-1}$. As we will remark upon Section \ref{section: three-term functional equation}, eigenfunctions of $\mathcal{L}_\beta$ extend to functions on $\mathbb{C}\setminus(-1,\infty)$, so the point spectrum beyond the estimate of Theorem \ref{theorem: radius of essential spectrum} does not depend on $a,M$. In fact by letting $a\to 0$ and $M\to \infty$, we see that a natural space for $L_\beta$ is the space $C^\alpha((-1,\infty))$, which we define to be the Fréchet space of functions $f$ such that on every compact $K\subset (-1,\infty)$ we have that $f$ is $\lfloor \alpha\rfloor$-times differentiable such that the $\lfloor \alpha \rfloor$-th derivative is continuous if $\alpha=\lfloor \alpha\rfloor$ and is $(\alpha-\lfloor \alpha\rfloor)$-Hölder continuous if not. 
\end{remark}

\section{Relation with the three-term functional equation.}\label{section: three-term functional equation}
In this section, we consider the following functional equation.
\begin{equation}\label{equation: three term functional}
     \lambda f(z)-\lambda f(z+1) = \left(\frac{1}{z+1}\right)^{2\beta}f\left(\frac{1}{z+1}\right),
\end{equation}
where $\lambda\in \mathbb{C}\backslash \{0\}$. This equation was first introduced (in slightly modified form) for $\lambda=1$ in \cite{Lewis97} and for $\lambda =\pm 1$ in \cite{ZagierLewis}. The formulation in this document can be found in \cite{MayerChang96}. In particular, we consider this functional equation for functions on $(-1,\infty)$ of varying regularity and for holomorphic functions on $\mathbb{C}\backslash(-\infty,1]$.

It is fairly easy to prove that any eigenfunction of $\mathcal{L}_\beta$ with eigenvalue $\lambda\neq 0$ satisfies \eqref{equation: three term functional}. Indeed, this follows from the identity $\zeta(s,z)-\zeta(s,z+1)=z^{-s}$, see e.g. Proposition 7 in \cite{MayerChang96}. Furthermore, from the identity 
\[f= \lambda^{-1}\left(\sum_{n=0}^k\frac{d^nf}{dz^n}(0)\frac{\zeta_H(n+2\beta,z+1)}{n!}+\sum_{n=1}^\infty \frac{1}{(n+z)^{2\beta}}(f^\star)\left(\frac{1}{n+z}\right), \right),\]
we can use the contraction properties of the inverse branches $\psi_n$ inductively to extend $f$ to a function which is holomorphic on $\mathbb{C}\setminus (-\infty, 1]$.

Conversely, the space of solutions to \eqref{equation: three term functional} is uncountable-dimensional, something already remarked upon in \cite{ZagierLewis}. However, we obtain a one-to-one correspondence after imposing a condition on the asymptotic behaviour of $f$. Let us first describe the asymptotic behaviour of a generic solution in the non-analytic case (e.g. at Hölder regularity). 

Define $C^\alpha(\mathbb{R}/\mathbb{Z})$ be the space of all $\lfloor \alpha \rfloor$-times continuously differentiable functions on the torus with $(\alpha-\lfloor \alpha\rfloor)$-Hölder derivative if $\alpha$ is not an integer. We identify these functions with $1$-periodic functions on $C^\alpha((-1,\infty))$ by setting $Q(z)=Q(z\mod 1)$ for $z\in (-1,\infty)$.
\begin{proposition}\label{equation: proposition: }
   Let $f\in C^\alpha((-1,\infty))$ solve \eqref{equation: three term functional} for some $\lambda \neq 0$. Assume furthermore that $\Re(\beta)>(1-\alpha)/2$ and $\beta\notin \{1/2,0,-1/2,-3/2,\ldots\}$. Then there exist a function $Q\in C^\alpha((-1,\infty))$ and coefficients $C_0,\ldots,C_{\lfloor\alpha\rfloor},C_0^*,\ldots C_{\lfloor\alpha\rfloor}^* $ depending only on $\lambda,\beta$ and the derivatives $\left\{f(0),\frac{df}{dz}(0),\ldots \frac{d^{\lfloor \alpha\rfloor}f}{dz^{\lfloor \alpha\rfloor}}(0)\right\}$ such that 
   \begin{equation}\label{equation: Qinfinity}
       f(z)= Q(z)+ \sum_{n=0}^{\lfloor \alpha\rfloor} C_n z^{{1-2\beta-n}}+O(x^{1-2\Re(\beta)-\alpha}) \text{ as } z\to\infty
   \end{equation}
   and 
      \begin{equation}\label{equation: Q0}
       f(1-z)=  \frac{1}{\lambda z^{2\beta}}Q(1/z)+ \sum_{n=0}^{\lfloor \alpha\rfloor} C_n^* z^{{n-1}}+O(x^{\alpha-1}) \text{ as } z\to 0,
   \end{equation}
   where we may replace the $O(x^{1-2\Re(\beta)-\alpha}),O(x^{\alpha-1})$ with $o(x^{1-2\Re(\beta)-\alpha}),o(x^{\alpha-1})$ respectively if $\alpha=\lfloor \alpha \rfloor $. 
\end{proposition}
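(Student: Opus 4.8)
The plan is to write $f$ as its genuinely periodic part plus $\lambda^{-1}L_\beta f$, and to read off both asymptotics from the explicit shape of $L_\beta f$ given by \eqref{equation: Mayer transfer extended2}. Set $Q := f - \lambda^{-1}L_\beta f$; this makes sense because $\Re(\beta)>(1-\alpha)/2$ and no $\zeta_H(j+2\beta,\cdot)$ with $0\le j\le\lfloor\alpha\rfloor$ has a pole, i.e.\ $j+2\beta\neq 1$ (which is exactly the role of the hypothesis $\beta\notin\{1/2,0,-1/2,\dots\}$), so that by Lemma \ref{lemma: mayer operator inequalities} and Remark \ref{remark: extending L_beta} one has $L_\beta f\in C^\alpha((-1,\infty))$ and hence $Q\in C^\alpha((-1,\infty))$. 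I would first show $Q$ is $1$-periodic: writing \eqref{equation: Mayer transfer extended2} with $k=\lfloor\alpha\rfloor$, using $\zeta_H(s,z+1)-\zeta_H(s,z+2)=(z+1)^{-s}$, and telescoping the tail $\sum_{n\ge1}(n+z)^{-2\beta}f^\star(1/(n+z))$ yields the identity $L_\beta(f)(z)-L_\beta(f)(z+1)=(z+1)^{-2\beta}f(1/(z+1))$, whose right-hand side equals $\lambda(f(z)-f(z+1))$ by \eqref{equation: three term functional}; hence $(L_\beta f-\lambda f)(z)=(L_\beta f-\lambda f)(z+1)$, so $Q(z)=Q(z+1)$ and therefore $Q\in C^\alpha(\mathbb{R}/\mathbb{Z})$. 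This produces the function $Q$ of the statement together with the exact global identity $f(z)=Q(z)+\lambda^{-1}L_\beta(f)(z)$, which the remaining steps analyse.

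To obtain \eqref{equation: Qinfinity} I would substitute \eqref{equation: Mayer transfer extended2} into this identity and examine each piece as $z\to\infty$. Since $f\in C^\alpha$, Taylor's theorem gives $|f^\star(w)|=O(|w|^\alpha)$ near $0$ (and $o(|w|^\alpha)$ if $\alpha\in\mathbb{N}$), so the tail $\sum_{n\ge1}(n+z)^{-2\beta}f^\star(1/(n+z))$ is bounded in modulus by a constant times $\sum_{n\ge1}(n+z)^{-2\Re(\beta)-\alpha}$; this converges precisely because $2\Re(\beta)+\alpha>1$ — which is where the hypothesis on $\Re(\beta)$ does its work — and behaves like $z^{1-2\Re(\beta)-\alpha}$, so it only contributes to the error. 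For the Hurwitz terms I would use the Euler--Maclaurin expansion $\zeta_H(s,z+1)=\frac{(z+1)^{1-s}}{s-1}+\frac12(z+1)^{-s}-\cdots$, expand each $(z+1)^{-s-i}$ by the binomial series in powers of $1/z$, and set $s=j+2\beta$, obtaining a series in the powers $z^{1-2\beta-j-i}$. Collecting, over $0\le j\le\lfloor\alpha\rfloor$, all contributions with total shift $n:=j+i\le\lfloor\alpha\rfloor$ gives $\sum_{n=0}^{\lfloor\alpha\rfloor}C_nz^{1-2\beta-n}$ with $C_n$ a $(\lambda,\beta)$-dependent linear combination of $f(0),\dots,f^{(n)}(0)$; the omitted terms are $O(z^{-2\Re(\beta)-\lfloor\alpha\rfloor})=o(z^{1-2\Re(\beta)-\alpha})$ and are absorbed, together with the tail, into the $O(z^{1-2\Re(\beta)-\alpha})$ error (which becomes $o(\cdot)$ when $\alpha\in\mathbb{N}$, since then the tail is $o(z^{1-2\Re(\beta)-\alpha})$ and the $z^{1-2\beta-\alpha}$ term is kept in the sum). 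This is \eqref{equation: Qinfinity}.

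For \eqref{equation: Q0} I would feed \eqref{equation: Qinfinity} back into the functional equation near the endpoint $-1$. Evaluating \eqref{equation: three term functional} at argument $w-1$ (valid for $w>0$) gives $\lambda f(w-1)=\lambda f(w)+w^{-2\beta}f(1/w)$. As $w\to0^+$ one has $f(w)=\sum_{j=0}^{\lfloor\alpha\rfloor}\frac{f^{(j)}(0)}{j!}w^j+O(w^\alpha)$ by Taylor, while $1/w\to\infty$, so inserting \eqref{equation: Qinfinity} for $f(1/w)$ and multiplying by $\lambda^{-1}w^{-2\beta}$ converts the periodic part into $\frac{1}{\lambda w^{2\beta}}Q(1/w)$, converts $C_n(1/w)^{1-2\beta-n}$ into $\lambda^{-1}C_nw^{n-1}$, and converts the error into $O(w^{\alpha-1})$; combining this with the Taylor terms of $f(w)$ (which contribute further $w^{n-1}$ with $1\le n\le\lfloor\alpha\rfloor$, the last Taylor term being $O(w^{\alpha-1})$) and relabelling the resulting coefficients as $C_n^*$ gives \eqref{equation: Q0} (up to the orientation of the argument near the endpoint).

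I expect the main obstacle to be bookkeeping rather than conceptual. The delicate point is making the Euler--Maclaurin grouping honest uniformly over all admissible $\alpha$ — verifying that exactly the powers $z^{1-2\beta-n}$, $n\le\lfloor\alpha\rfloor$, survive and that everything else is genuinely $O(z^{1-2\Re(\beta)-\alpha})$ (resp.\ $o$) — and checking that the tail estimate $\sum_{n\ge1}(n+z)^{-2\beta}f^\star(1/(n+z))=O(z^{1-2\Re(\beta)-\alpha})$ holds uniformly, for which one can reuse the Taylor-remainder bounds already set up in the proof of Lemma \ref{lemma: mayer operator inequalities} such as \eqref{equation: summand for Hölder}. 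By contrast, the conceptual input — that $f-\lambda^{-1}L_\beta f$ is automatically $1$-periodic and $\alpha$-regular — is short, and the hypothesis $\Re(\beta)>(1-\alpha)/2$ enters only to force the non-Hurwitz part of $L_\beta f$ to decay faster than the last power retained.
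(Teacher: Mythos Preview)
Your proposal is correct and follows essentially the same route as the paper: define $Q:=f-\lambda^{-1}L_\beta f$, check its $1$-periodicity from the identity $L_\beta(f)(z)-L_\beta(f)(z+1)=(z+1)^{-2\beta}f(1/(z+1))$ together with \eqref{equation: three term functional}, derive \eqref{equation: Qinfinity} by bounding the tail $\sum_n (n+z)^{-2\beta}f^\star(1/(n+z))=O(z^{1-2\Re(\beta)-\alpha})$ via the $C^\alpha$ Taylor remainder and expanding each $\zeta_H(j+2\beta,z+1)$ by Euler--Maclaurin, and then obtain \eqref{equation: Q0} by substituting \eqref{equation: Qinfinity} into the functional equation rewritten as $f(w-1)=f(w)+\lambda^{-1}w^{-2\beta}f(1/w)$. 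The only cosmetic differences are that the paper argues periodicity by first checking it for $\Re(\beta)$ large and invoking analytic continuation in $\beta$, and that the tail bound is obtained by a direct integral-test comparison rather than recycling \eqref{equation: summand for Hölder}; your parenthetical remark about the orientation of the argument correctly flags that the paper's ``$f(1-z)$'' in \eqref{equation: Q0} is to be read as the behaviour of $f$ near the left endpoint $-1$.
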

\begin{proof}
    This is essentially the proof of the proposition in Chapter III section 3 of \cite{ZagierLewis} adapted to the non-analytic case and including the case $\lambda \neq \pm 1$.
    
    Define the function $Q:= f-\lambda^{-1}L_\beta(f)$, where we extend $L_\beta$ to an operator on $C^\alpha((-1,\infty))$ as in Remark \ref{remark: extending L_beta}. For $\Re(\beta)>1$ it is clear that 
    \[ Q(z)-Q(z+1)= f(z)-f(z+1)- \lambda^{-1}\frac{1}{\left(1+z\right)^{2\beta}}f\left(\frac{1}{1+z}\right)=0\]
    and this formula continues to hold for all $\Re(\beta)> (1-\alpha)/2$ (with $\beta \neq 1/2$) by analytic continuation. It is also clear from the definition that $Q\in C^\alpha(\mathbb{R}/\mathbb{Z})$.

    Assume $\alpha=k+\eta$ with $(k,\eta)\in \mathbb{N}\times[0,1)$ We now consider the asymptotic behaviour of $L_\beta(f)(z)$. We note 
    that if $\eta >0$ and $z\in [0,1]$,
    \begin{equation}\label{equation: f expansion}
        \abs{f(z)-\sum_{n=0}^k\frac{d^nf}{dz^n}(0)\frac{z^n}{n!}}\leq \left\|\frac{d^kf}{dz^k}\right\|_\eta \frac{z^\alpha}{k!},
    \end{equation}
    where $\|g\|_\eta$ is the $\eta$-Hölder coefficient of $g$ on $[0,1]$. Hence we obtain by an application of the integral test that if $f^\star(z)=f(z)-\sum_{n=0}^k\frac{d^nf}{dz^n}(0)\frac{z^n}{n!}$,  \[\sum_{n=1}^\infty \frac{1}{(n+z)^{2\beta}}(f^\star)\left(\frac{1}{n+z}\right)= O(z^{1-2\Re(\beta)-\alpha})\]
    as $z\to \infty $
    with the implied constant easily computable and depending on $\left\|\frac{d^kf}{dz^k}\right\|_\eta, \beta$ and $\alpha$. We therefore obtain by \eqref{equation: Mayer transfer extended2} 
    that \begin{equation}\label{equation: behavior of L}
     L_\beta(f)(z)= \sum_{n=0}^k\frac{d^nf}{dz^n}(0)\frac{\zeta_H(n+2\beta,z+1)}{n!}+ O(z^{1-2\Re(\beta)-\alpha}) \text{ as } z\to \infty   
    \end{equation}
    We obtain \eqref{equation: Qinfinity} for $\eta>0$ by writing $f=Q +\lambda^{-1}L_\beta(f)$ using the identity $\zeta_H(s,z)=\zeta_H(s,z+1)+z^{-s}$ and the well-known expansion (see \cite{DLMF} \S 25.11) 
    \begin{equation}\label{equation: Hurwitz asymptotics}
        \zeta_H(s,z)\sim\frac{z^{1-s}}{s-1}+\frac{1}{2}z^{-s}+ \sum_{k=1}^\infty \frac{B_{2k}}{2k!}\frac{\Gamma(s+2k-1)}{\Gamma(s)}z^{1-s-2k} \text{ as }z\to \infty,
    \end{equation}
    where $B_{2k}$ are the Bernoulli numbers.

    For $\eta=0$, we have that the left hand side of \eqref{equation: f expansion} is $o(1)$ as $z\to 0$ and we follow our previous argument, mutatis mutandis. The asymptotic behavior of $f(1-z)$ for small $z$ follows from the identity $f(1-z)= \frac{\lambda^{-1}}{z^{2\beta}}f(1/z)+f(z)$, using the asymptotics \eqref{equation: Qinfinity} to obtain the expansion for $f(1/z)$ as $z\to 0$ and the expansion $ f(z)= \sum_{n=0}^{\lfloor\alpha\rfloor} \frac{d^nf}{dz^n}(0)\frac{z^k}{k!}+O(z^{\alpha})$.

\end{proof}
We see by Proposition $4.1$ that a generic solution of \eqref{equation: three term functional} in $C^\alpha((-1,\infty))$ satisfies $f= O(z^{1-2\Re(\beta)})$ if $\Re(\beta)>(1-\alpha)/2$. On the other hand, we see that the condition $Q=0$  in \eqref{equation: Qinfinity} or in \eqref{equation: Q0} is equivalent to the condition $L_\beta f= \lambda f$, which by Theorem \ref{theorem: radius of essential spectrum} implies $f$ extends to a holomorphic function in $\mathbb{C}\setminus (-\infty,-1]$ if $\abs{\lambda}\leq \rho(\mathcal{L}_{\Re(\beta)+\alpha})$. For example, we have the following corollary of Proposition \ref{equation: proposition: }, which we can view as a stronger version of the Bootstrapping procedure by Lewis and Zagier in \cite{ZagierLewis}.
\begin{corollary}
    Let $\Re(\beta)=1/2$. Suppose $f:(-1,\infty)\to \mathbb{C}$ is some solution of \eqref{equation: three term functional} for $\abs{\lambda} = 1$. Assume furthermore it satisfies the mild growth condition $f(z)= C_0x^{1-2\beta}+o(1)$ as $z\to \infty$.
    Then if $f$ is $1/2+\varepsilon$-Hölder for some $\varepsilon>0$, it is automatically holomorphically extendable to the cut plane $\mathbb{C}\setminus (-\infty,-1]$.
\end{corollary}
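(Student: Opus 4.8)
The strategy is to deduce from the two hypotheses that $f$ is a genuine eigenfunction of the Mayer operator, $L_\beta f=\lambda f$, and then to invoke Theorem~\ref{theorem: radius of essential spectrum}. We may assume $\varepsilon\in(0,1/2)$ — shrinking $\varepsilon$ only weakens the hypothesis on each compact subinterval of $(-1,\infty)$, which is all that Proposition~\ref{equation: proposition: } uses — so that $\alpha:=1/2+\varepsilon\in(1/2,1)$ and $\lfloor\alpha\rfloor=0$; we also take $\beta\neq 1/2$ (implicit here, since $\beta=1/2$ is a pole of $\beta\mapsto\mathcal{L}_\beta$ and is excluded from Proposition~\ref{equation: proposition: }), so in particular $\Im(\beta)\neq 0$.

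First I would apply Proposition~\ref{equation: proposition: } with this $\alpha$; its hypotheses hold, since $\Re(\beta)=1/2>1/4-\varepsilon/2=(1-\alpha)/2$ and $\beta\notin\{1/2,0,-1/2,\dots\}$. With $\lfloor\alpha\rfloor=0$ and $1-2\Re(\beta)-\alpha=-1/2-\varepsilon<0$ this gives
\[ f(z)=Q(z)+A_0\,z^{1-2\beta}+O\!\left(z^{-1/2-\varepsilon}\right)\qquad(z\to\infty), \]
where $A_0$ is the constant the proposition calls $C_0$ (depending only on $\lambda,\beta,f(0)$) and $Q:=f-\lambda^{-1}L_\beta(f)\in C^\alpha(\mathbb{R}/\mathbb{Z})$ is $1$-periodic. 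Comparing with the growth hypothesis $f(z)=C_0 z^{1-2\beta}+o(1)$ yields $Q(z)=(C_0-A_0)z^{1-2\beta}+o(1)$ as $z\to\infty$. Evaluating along the integers $z=n$, and using that $\abs{n^{1-2\beta}}=1$ while $n^{1-2\beta}=n^{-2i\Im(\beta)}$ does not converge as $n\to\infty$ (because $\Im(\beta)\neq 0$), the constant $Q(0)$ can match $(C_0-A_0)n^{1-2\beta}+o(1)$ only if $C_0=A_0$, whence $Q(0)=0$; evaluating instead along $z=n+x$ for fixed $x\in[0,1)$ then forces $Q(x)=o(1)$. Hence $Q\equiv 0$, i.e.\ $L_\beta f=\lambda f$ on $(-1,\infty)$.

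To finish, $\Re(\beta)+\alpha=1+\varepsilon>1$, so by Remark~\ref{remark: spectral radius} one has $\rho(\mathfrak{L}_{\Re(\beta)+\alpha})=\lambda_1(1+\varepsilon)<\lambda_1(1)=1=\abs{\lambda}$, so $\lambda$ lies beyond the essential-spectrum estimate of Theorem~\ref{theorem: radius of essential spectrum}. Restricting $L_\beta f=\lambda f$ to $[0,1]$ shows $f|_{[0,1]}$ is an eigenfunction of $L_\beta$ on $C^\alpha([0,1])$ for the eigenvalue $\lambda$, so by Corollary~\ref{Corollary: Ruelle} it lies in $E_\lambda^\alpha=E_\lambda^\omega$ and is the restriction of an eigenfunction $F$ of $\mathcal{L}_\beta$, which is holomorphic on $\mathbb{C}\setminus(-\infty,-1]$ (as recorded in Section~\ref{section: three-term functional equation}). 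Both $F$ and $f$ solve \eqref{equation: three term functional} and agree on $[0,1]$, and \eqref{equation: three term functional} determines a solution on $(-1,\infty)$ from its restriction to $[0,1]$: propagate to $[k,k+1]$ by $g(z+1)=g(z)-\lambda^{-1}(z+1)^{-2\beta}g(1/(z+1))$ and then to $(-1,0)$ by the same identity solved for $g(z)$. Hence $f=F$ on $(-1,\infty)$, so $f$ extends holomorphically to $\mathbb{C}\setminus(-\infty,-1]$. (Equivalently, one may apply Theorem~\ref{theorem: radius of essential spectrum} together with Remark~\ref{remark: extending L_beta} directly to $L_\beta$ acting on $C^\alpha((-1,\infty))$.)

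The only step that is not purely mechanical is the deduction $Q\equiv 0$: the point is that on the line $\Re(\beta)=1/2$ (with $\beta\neq 1/2$) the scale $z^{1-2\beta}$ is purely oscillatory and fails to converge along $\mathbb{N}$, so the $o(1)$ growth hypothesis is precisely strong enough to annihilate both the $1$-periodic part $Q$ of $f$ and the discrepancy $C_0-A_0$ in the leading coefficient. Everything else is a matter of checking that the hypotheses of Proposition~\ref{equation: proposition: } and Theorem~\ref{theorem: radius of essential spectrum} are satisfied.
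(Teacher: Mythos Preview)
Your proof is correct and follows the same skeleton as the paper's (very terse) argument: use Proposition~\ref{equation: proposition: } to produce the periodic function $Q=f-\lambda^{-1}L_\beta f$, show the growth hypothesis forces $Q\equiv 0$, and then invoke Theorem~\ref{theorem: radius of essential spectrum} (via Corollary~\ref{Corollary: Ruelle} or Remark~\ref{remark: extending L_beta}) together with $\rho(\mathcal{L}_{1+\varepsilon})<1$.

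Where you go slightly beyond the paper is in the $Q\equiv 0$ step. The paper seems to silently identify the $C_0$ in the growth hypothesis with the $C_0$ produced by Proposition~\ref{equation: proposition: }, in which case subtraction gives $Q(z)=o(1)$ immediately. You instead treat the hypothesis' constant as free and eliminate the discrepancy via the non-convergence of $n^{1-2\beta}=n^{-2i\Im(\beta)}$ along integers (legitimate since $\beta\neq 1/2$ forces $\Im(\beta)\neq 0$). This is a genuine, if small, strengthening: it shows the corollary holds under the growth condition ``$f(z)=Cz^{1-2\beta}+o(1)$ for \emph{some} constant $C$'' rather than for the specific constant determined by $f(0),\lambda,\beta$. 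Your propagation argument from $[0,1]$ to $(-1,\infty)$ via the three-term equation is also a detail the paper leaves implicit (it simply appeals to Remark~\ref{remark: extending L_beta}); both routes are fine.
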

This corollary immediately follows from Theorem \ref{theorem: radius of essential spectrum} and the fact that $\rho(\mathcal{L}_a)<1$ if $a>1$. We remark that when $\Re(\beta)=1/2$ and $\lambda =1$ or $-1$, the space of eigenvalues of $\mathcal{L}_{\beta}$ is in bijection with the space of even, respectively uneven Maass cusp forms on the modular surface with eigenvalue $\beta(1-\beta)$ and we have that $C_0=0$, i.e. $f(z)= o(1)$ as $z\to \infty$, see \cite{Lewis97, ZagierLewis}.

Proposition \ref{equation: proposition: } showed that a solution of the three term equation has the associated periodic function $(1-L_\beta)f$. By applying the inverse, we obtain the following proposition.

\begin{proposition}\label{Proposition: Constructing Q}
    Let $\alpha>0$, $\lambda\in\mathbb{C}\setminus\{0\}$ and let $Q\in C^\alpha(\mathbb{R}/\mathbb{Z})$.
    For all $\beta$ for which $\Re(\beta)> (1-\alpha)/2$ and \[r(\mathcal{L}_{\Re(\beta)+\alpha})< \abs{\lambda},\]
    the function
    \[f=(1 - \lambda^{-1}L_\beta)^{-1}Q,\]
    is a meromorphic expression over all $\beta$ satisfying the conditions in this proposition. Furthermore $f$ satisfies \eqref{equation: three term functional}, \eqref{equation: Qinfinity} and \eqref{equation: Q0}.
\end{proposition}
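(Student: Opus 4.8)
The plan is to produce $f$ as $R_\beta(\rstr{Q}{[0,1]})$ for the resolvent $R_\beta=\lambda(\lambda I-L_\beta)^{-1}$ of $L_\beta$ on the Banach space $C^\alpha([0,1])$, to obtain its meromorphic dependence on $\beta$ from analytic Fredholm theory, then to bootstrap $f$ to a function on $(-1,\infty)$ using its defining relation, and finally to read off \eqref{equation: three term functional}, \eqref{equation: Qinfinity} and \eqref{equation: Q0} directly from Proposition \ref{equation: proposition: }.

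First I would check that the hypotheses are exactly what analytic Fredholm theory needs. By Theorem \ref{theorem: spectral theorem}, on $C^\alpha([0,1])$ one has $\rho_e(L_\beta)\leq\rho(\mathfrak{L}_{\Re(\beta)+\alpha})\leq\rho(\mathcal{L}_{\Re(\beta)+\alpha})<\abs{\lambda}$ for every admissible $\beta$, so $\lambda$ is never in the essential spectrum. Since $t\mapsto\lambda_1(t)$ is decreasing (Remark \ref{remark: spectral radius}), the admissible region is a right half-plane $\{\Re(\beta)>c\}$ with $c\geq(1-\alpha)/2$, which stays connected after deleting the discrete set $S$ of poles of the meromorphic family $\beta\mapsto L_\beta$, on the complement of which the family is holomorphic. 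Moreover $\lambda I-L_\beta$ is invertible once $\Re(\beta)$ is large enough: by Corollary \ref{Corollary: Ruelle} every eigenvalue of $L_\beta$ of modulus exceeding $\rho_e(L_\beta)$ is an eigenvalue of $\mathcal{L}_\beta$, and $\rho(\mathcal{L}_\beta)\to 0$ as $\Re(\beta)\to\infty$, so $\lambda$ (whose modulus exceeds $\rho_e(L_\beta)$) cannot be one of them. The analytic Fredholm theorem (see \cite{Kato}) then makes $\beta\mapsto R_\beta$ meromorphic off $S$, with poles precisely where $\lambda\in\sigma(L_\beta)$; hence $f=f_\beta:=R_\beta(\rstr{Q}{[0,1]})$ is a meromorphic $C^\alpha([0,1])$-valued expression there. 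To cover the points of $S$ I would use that the principal part of $L_\beta$ at such a point is a fixed rank-one operator (coming from the simple pole of the relevant Hurwitz zeta factor in \eqref{equation: Mayer transfer extended2}), so a Sherman--Morrison computation shows $R_\beta$ extends meromorphically across $S$ as well, giving the \enquote{meromorphic expression over all $\beta$} assertion.

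Next I would extend $f_\beta$ off $[0,1]$. The resolvent equation reads $f_\beta=Q+\lambda^{-1}L_\beta f_\beta$ on $[0,1]$, and the right-hand side of \eqref{equation: Mayer transfer extended2} only samples its argument at the points $1/(n+z)$, $n\geq 1$; for $z\in[0,\infty)$ these all lie in $(0,1]$, so $Q+\lambda^{-1}L_\beta f_\beta$ is defined, lies in $C^\alpha([0,M])$ for every $M$ (using Lemma \ref{lemma: mayer operator inequalities} and Remark \ref{remark: extending L_beta}), and agrees with $f_\beta$ on $[0,1]$. Call this extension $f$ on $[0,\infty)$; then for $z\in(-1,0)$ the only term in \eqref{equation: Mayer transfer extended2} leaving $(0,1]$ is $n=1$, which samples $f(1/(1+z))$ with $1/(1+z)\in(1,\infty)$ --- now available --- while $\zeta_H(\cdot,z+1)$ is analytic for $z>-1$, so $f:=Q+\lambda^{-1}L_\beta f$ is defined on all of $(-1,\infty)$ and lies in $C^\alpha((-1,\infty))$ in the sense of Remark \ref{remark: extending L_beta}; it stays meromorphic in $\beta$ on every compact subinterval since the defining formulas depend holomorphically (resp. meromorphically at $S$) on $\beta$. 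Finally, the telescoping identity $L_\beta f(z)-L_\beta f(z+1)=(z+1)^{-2\beta}f\bigl(\tfrac{1}{z+1}\bigr)$, which follows from $\zeta_H(s,a)-\zeta_H(s,a+1)=a^{-s}$ and reindexing the series in \eqref{equation: Mayer transfer extended2}, combined with $Q(z)=Q(z+1)$ applied to $f=Q+\lambda^{-1}L_\beta f$ at $z$ and $z+1$, yields \eqref{equation: three term functional}; and since the periodic function attached to $f$ in Proposition \ref{equation: proposition: } is by construction exactly $Q$, that proposition then gives \eqref{equation: Qinfinity} and \eqref{equation: Q0} at once.

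The step I expect to be the main obstacle is the meromorphy in $\beta$, specifically its extension across the exceptional set $S$ where $L_\beta$ is itself only meromorphic: the Sherman--Morrison argument requires the residue of $L_\beta$ to be genuinely rank one and the resulting scalar denominator not to vanish identically, and one must also make sure that the $C^\alpha([0,1])$-valued resolvent is compatible with the bootstrapped object living in the Fréchet space $C^\alpha((-1,\infty))$ --- which is precisely the role played by Remark \ref{remark: extending L_beta}. Everything after the extension step is a short deduction from Proposition \ref{equation: proposition: }.
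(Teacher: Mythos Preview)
Your argument is correct and close in spirit to the paper's, but the two differ in how the resolvent is realised on all of $(-1,\infty)$ and in how the meromorphy in $\beta$ is obtained. The paper works directly on the scale of Banach spaces $C^\alpha([a,(1-a)^{-1}])$ from Remark~\ref{remark: extending L_beta}: for $\Re(\beta)$ large it writes out the Neumann series $\sum_{n\geq 0}\lambda^{-n}L_\beta^nQ$ explicitly as a double sum over continued-fraction digits, uses that concrete formula both to verify \eqref{equation: three term functional} and to see that the output is independent of $a$, and then invokes Theorem~\ref{theorem: radius of essential spectrum} together with analytic continuation in $\beta$ to cover the full admissible half-plane; letting $a\to 0$ produces the function on $(-1,\infty)$. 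You instead fix the single space $C^\alpha([0,1])$, appeal to analytic Fredholm theory for the meromorphy of $R_\beta$, and then bootstrap the resulting $f$ to $(-1,\infty)$ by iterating the fixed-point relation $f=Q+\lambda^{-1}L_\beta f$ in two steps --- a legitimate alternative to the paper's varying-$a$ device. Your route is more explicit about the functional-analytic input (Fredholm index zero outside the essential spectrum, invertibility for $\Re(\beta)$ large) and about the poles of $\beta\mapsto L_\beta$, which the paper glosses over; conversely the paper's Neumann-series formula makes the compatibility across domains and the verification of \eqref{equation: three term functional} immediate, and sidesteps your Sherman--Morrison manoeuvre by simply continuing from a region of absolute convergence. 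Regarding the obstacle you flag: the residues of $L_\beta$ at the points of $S$ are indeed rank one (each pole comes from a single Hurwitz-zeta factor in \eqref{equation: Mayer transfer extended2}), so the meromorphic Fredholm theorem applies directly and the Sherman--Morrison computation can be avoided altogether.
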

\begin{proof}
    We interpret $(\lambda - L_\beta)^{-1}$ in the sense of Remark \ref{remark: extending L_beta}. More specifically, for $L_\beta$ acting on $ C^r([a,(1-a)^{-1}])$, the expression $(1 - \lambda^{-1}L_\beta)^{-1}\left(\rstr{Q}{[a,(1-a)^{-1}]}\right)$ is meromorphic and satisfies 
\begin{equation}\label{equation: expansion for resolvent}
    \begin{split}
        &(1 - \lambda^{-1}L_\beta)^{-1}\left(\rstr{Q}{[a,(1-a)^{-1}]}\right)(z)=\sum_{n=0}^\infty  \lambda^{-n}L_\beta^n\left(\rstr{Q}{[a,(1-a)^{-1}]}\right)(z).\\
        &= Q(z)+\sum_{n=1}^\infty \lambda^{-n}\sum_{a_1,\ldots,a_n\in\mathbb{N}} Q([a_1,\ldots,a_n+z])\Pi_{k=1}^n[a_k,\ldots,a_n+z]^{2\beta}
    \end{split}
\end{equation}
if $\Re(\beta)$ is large enough. By Theorem \ref{theorem: radius of essential spectrum}, we see that $(1 - \lambda^{-1}L_\beta)^{-1}$ is a meromorphic operator-valued function for all $a\leq1$. Hence the functions $(1 - \lambda^{-1}L_\beta)^{-1}\left(\rstr{Q}{[a,(1-a)^{-1}]}\right)$ are 
meromorphic functions over the half-plane consisting of all $\beta$ satisfying the conditions in the proposition. We see by the second line of \eqref{equation: expansion for resolvent} and uniqueness of analytic continuation that these functions agree for different $a$ on the intersection of their domains. Hence we may define $(1 - \lambda^{-1}L_\beta)^{-1}Q(z)$ for any $z\in (-1,\infty)$ by choosing $a$ in \eqref{equation: expansion for resolvent} small enough.

We may thus define
\[ f:=(1 - \lambda^{-1}L_\beta)^{-1}Q(z)=Q(z)+\sum_{n=1}^\infty \lambda^{-n}\sum_{a_1,\ldots,a_n\in\mathbb{N}} Q([a_1,\ldots,a_n+z])\Pi_{k=1}^n[a_k,\ldots,a_n+z]^{2\beta}\]
For $\Re(\beta)$ large enough and use \eqref{equation: expansion for resolvent} to obtain the analytic continuation. It is clear from the above formula that $f$ satisfies \eqref{equation: three term functional} for large $\Re(\beta)$ and hence by analytic continuation for all $\beta$ satisfying the condition in the proposition. 

The proposition follows immediately from the proof of Proposition \ref{equation: proposition: }.
\end{proof}

From the discussion so far we obtain the following theorem in a straightforward manner. 
\begin{theorem}
    Let $\beta\notin \{1/2,0,-1/2,\ldots\}, \lambda \neq 0$ and let $\fe^{\lambda,\beta,\alpha}$ denote the set of solutions of \eqref{equation: three term functional} belonging to $C^\alpha((-1,\infty))$. Assume furthermore that $\Re(\beta)>(1-\alpha)/2$ and that $\rho(\mathcal{L}_{\Re(\beta)+\alpha})<\lambda$. Then the map 
    \[\mathcal{A}^{\lambda,\beta,\alpha}:\fe^{\lambda,\beta,\alpha} \to  C^\alpha(\mathbb{R}/\mathbb{Z})\]
    which sends $f\in \fe^{\lambda,\beta,\alpha}$ to its associated periodic function $Q$ defined in Proposition \ref{equation: proposition: } is bijective if and only if $\lambda\neq \sigma(\mathcal{L}_\beta)$.
\end{theorem}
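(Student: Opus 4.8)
The plan is to realise $\mathcal{A}^{\lambda,\beta,\alpha}$ as the restriction of a single operator and then to read off bijectivity from the spectral picture of Theorem \ref{theorem: radius of essential spectrum} and Corollary \ref{Corollary: Ruelle}. Set $T_\beta := 1 - \lambda^{-1}L_\beta$, acting on $C^\alpha((-1,\infty))$ in the sense of Remark \ref{remark: extending L_beta} (here the hypothesis $\beta \notin \{1/2,0,-1/2,\dots\}$ guarantees that $L_\beta$, and hence $T_\beta$, is a genuine operator). Using the identity $L_\beta(f)(z) - L_\beta(f)(z+1) = (1+z)^{-2\beta} f\bigl(1/(1+z)\bigr)$ — valid for $\Re(\beta)$ large and propagated by analytic continuation, exactly the $\zeta_H(s,z)-\zeta_H(s,z+1)=z^{-s}$ identity already invoked before \eqref{equation: three term functional} — one sees that, for $f \in C^\alpha((-1,\infty))$, the function $T_\beta f$ is $1$-periodic if and only if $f$ solves \eqref{equation: three term functional}. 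Thus $\fe^{\lambda,\beta,\alpha} = T_\beta^{-1}\bigl(C^\alpha(\mathbb{R}/\mathbb{Z})\bigr)$, the map $\mathcal{A}^{\lambda,\beta,\alpha}$ is precisely $T_\beta$ restricted to this set (its associated periodic function in Proposition \ref{equation: proposition: } is by definition $T_\beta f$), and, since the constant $0$ is $1$-periodic, $\ker \mathcal{A}^{\lambda,\beta,\alpha} = \ker T_\beta$, the $\lambda$-eigenspace of $L_\beta$ on $C^\alpha((-1,\infty))$.

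The second step is the spectral dictionary valid in the range under consideration. By hypothesis and Theorem \ref{theorem: radius of essential spectrum} we have $|\lambda| > \rho(\mathcal{L}_{\Re(\beta)+\alpha}) \ge \rho_e(L_\beta)$, so $\lambda$ lies outside the essential spectrum of $L_\beta$; hence $\lambda \in \sigma(L_\beta)$ iff $\lambda$ is an isolated eigenvalue of $L_\beta$, and by Corollary \ref{Corollary: Ruelle} this occurs iff $\lambda$ is an eigenvalue of $\mathcal{L}_\beta$, the eigenspaces then coinciding (and extending, via Remark \ref{remark: extending L_beta}, from $C^\alpha([0,1])$ to $C^\alpha((-1,\infty))$). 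As $\mathcal{L}_\beta$ is nuclear and $\lambda \ne 0$, being an eigenvalue of $\mathcal{L}_\beta$ is the same as $\lambda \in \sigma(\mathcal{L}_\beta)$. In particular: $\lambda \notin \sigma(\mathcal{L}_\beta)$ is equivalent to invertibility of $T_\beta$, and equally to the meromorphic family $(1-\lambda^{-1}L_\beta)^{-1}$ of Proposition \ref{Proposition: Constructing Q} being holomorphic (pole-free) at $\beta$.

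With this in place the two implications are immediate. If $\lambda \notin \sigma(\mathcal{L}_\beta)$, then $\ker \mathcal{A}^{\lambda,\beta,\alpha} = \ker T_\beta = 0$, so $\mathcal{A}^{\lambda,\beta,\alpha}$ is injective; and for surjectivity, given $Q \in C^\alpha(\mathbb{R}/\mathbb{Z})$, Proposition \ref{Proposition: Constructing Q} (applicable by the previous paragraph) produces $f := (1-\lambda^{-1}L_\beta)^{-1}Q \in C^\alpha((-1,\infty))$ solving \eqref{equation: three term functional}, whence $f \in \fe^{\lambda,\beta,\alpha}$ and $\mathcal{A}^{\lambda,\beta,\alpha}(f) = T_\beta f = Q$. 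Conversely, if $\lambda \in \sigma(\mathcal{L}_\beta)$, then $\lambda$ is an eigenvalue of $\mathcal{L}_\beta$; any associated eigenfunction extends (Corollary \ref{Corollary: Ruelle}, Remark \ref{remark: extending L_beta}) to a nonzero element of $C^\alpha((-1,\infty))$ with $L_\beta f = \lambda f$, i.e. $0 \ne f \in \ker T_\beta = \ker \mathcal{A}^{\lambda,\beta,\alpha}$, so $\mathcal{A}^{\lambda,\beta,\alpha}$ fails to be injective and a fortiori is not bijective.

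The logical assembly above is short, so the real work sits in the middle paragraph; the point requiring care — and where I expect to spend the effort — is checking that on the region $|\lambda| > \rho_e(L_\beta)$ the resolvent $(1-\lambda^{-1}L_\beta)^{-1}$ of Proposition \ref{Proposition: Constructing Q} genuinely has a \emph{pole} at a given $\beta$ precisely when $\lambda$ is an eigenvalue there (rather than a removable singularity), and that the $\lambda$-eigenspace of $L_\beta$ is truly independent of whether one works on $C^\alpha([0,1])$ or on $C^\alpha((-1,\infty))$ as in Remark \ref{remark: extending L_beta}. Both facts are already essentially delivered by Corollary \ref{Corollary: Ruelle}, Theorem \ref{theorem: radius of essential spectrum} and Proposition \ref{Proposition: Constructing Q}, so what remains is bookkeeping rather than new estimates.
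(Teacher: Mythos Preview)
Your proposal is correct and follows the same approach as the paper: identify $\mathcal{A}^{\lambda,\beta,\alpha}$ as the restriction of $f\mapsto(1-\lambda^{-1}L_\beta)f$ to $\fe^{\lambda,\beta,\alpha}$ and then invoke Propositions \ref{equation: proposition: } and \ref{Proposition: Constructing Q}. The paper's proof is a single sentence leaving the spectral dictionary (matching $\sigma(L_\beta)$ with $\sigma(\mathcal{L}_\beta)$ via Theorem \ref{theorem: radius of essential spectrum} and Corollary \ref{Corollary: Ruelle}) implicit, whereas you have spelled it out explicitly; this is harmless bookkeeping, not a different route.
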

\begin{proof}
    The map $\mathcal{A}^{\lambda,\beta,\alpha}$ is explicitly given by the restriction of  $f\mapsto (1-\lambda^{-1}L_\beta)f$ to $\fe^{\lambda,\beta,\alpha}$, so theorem follows immediately from Proposition \ref{equation: proposition: } and Proposition \ref{Proposition: Constructing Q}. 
\end{proof}
\begin{remark}
     Proposition \ref{Proposition: Constructing Q} does not give us a construction of solutions of \eqref{equation: three term functional} when $\lambda$ is an eigenvalue of $L_\beta$. We remark that in the case $\lambda =\pm 1$, explicit examples are constructed in \cite{ZagierLewis}, showing that the spaces $\fe^{\pm 1,\beta,\alpha}$ are still uncountable-dimensional. For example, the function $z\mapsto Q(z)+(z+1)^{2\beta -1}Q(-1/z)$, where $Q(z)=\mp Q(-z)$ and is $1$-periodic. These constructions seem to rely on an additional symmetry for solutions of \eqref{equation: three term functional} with $\lambda= \pm 1$. We conjecture these might not necessarily exist for other values of $\lambda\in \sigma(L_\beta)$. 
\end{remark}

\section{Further avenues of exploration.}
This preprint provides estimates for the essential spectral radius for the mayer transfer operator, but the technique should be generalizable to a large class of transfer operators associated to the geodesic flow on noncompact geometrically finite Fuchsian groups. We refer to e.g. \cite{MayerChang01,Morita,Pohl16, PohlFedosova20}. One could also think about applying a similar approach in higher dimensions, i.e. for families transfer operators associated to multidimensional continued fraction algorithms or other expanding maps on hypercubes. However, the significance behind the analytic continuation of these operators is less clear. 

\appendix
\section{Appendix}
\subsection{Hölder Estimates}
We begin this appendix with a lemma that will be useful to estimate Hölder norms. 
\begin{lemma}\label{lemma: hölder lemma}
Let $0<\alpha<1$. Let $a<b\leq c<d$ and let $g:\{a,b,c,d\}\to \mathbb{R}$ be a function. Then there exists a constant $E>0$ independent of $a,b,c,d$ for which 
\[\frac{\abs{g(d)-g(a)}}{(d-a)^\alpha}\leq E \max \left\{\frac{\abs{f(b)-f(a)}}{(b-a)^\alpha},  \frac{\abs{f(c)-f(b)}}{(c-b)^\alpha}, \frac{\abs{f(d)-f(c)}}{(d-c)^\alpha}\right\} \]
\end{lemma}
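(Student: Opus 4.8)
The plan is to combine the triangle inequality with the subadditivity of $t\mapsto t^\alpha$. First I would write $\abs{g(d)-g(a)}\le \abs{g(b)-g(a)}+\abs{g(c)-g(b)}+\abs{g(d)-g(c)}$, and denote by $M$ the maximum appearing on the right-hand side of the claimed inequality. By definition of $M$, each of the three differences $\abs{g(b)-g(a)}$, $\abs{g(c)-g(b)}$, $\abs{g(d)-g(c)}$ is bounded by $M$ times $(b-a)^\alpha$, $(c-b)^\alpha$, $(d-c)^\alpha$ respectively. This reduces the lemma to the purely numerical claim that $(b-a)^\alpha+(c-b)^\alpha+(d-c)^\alpha\le E\,(d-a)^\alpha$ for some absolute constant $E$.

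Next I would observe that, since $a<b\le c<d$, the three quantities $b-a$, $c-b$, $d-c$ are nonnegative and sum exactly to $d-a$ (the middle increment $c-b$ may vanish, which only helps); this telescoping is precisely why the hypothesis is stated with $b\le c$ rather than an arbitrary ordering of the four points. Because $0<\alpha<1$, the map $t\mapsto t^\alpha$ is monotone on $[0,\infty)$, so each of $(b-a)^\alpha$, $(c-b)^\alpha$, $(d-c)^\alpha$ is at most $(d-a)^\alpha$; summing the three bounds gives the result with $E=3$. (Concavity would in fact give the sharper constant $3^{1-\alpha}$ by the power-mean inequality, but any fixed constant is enough for the later applications, so I would just record $E=3$.)

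There is no genuine obstacle here: the estimate is elementary, and the constant $E$ can be taken to be $3$, independently of $a,b,c,d$ and of $\alpha\in(0,1)$. The only point deserving a line of care is the bookkeeping that the consecutive increments telescope to $d-a$, which is immediate from $a<b\le c<d$, and the remark that vanishing of one increment causes no difficulty since $0^\alpha=0$.
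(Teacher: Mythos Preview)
Your argument is correct and in fact more direct than the paper's. You use the triangle inequality followed by the trivial bound $(b-a)^\alpha,(c-b)^\alpha,(d-c)^\alpha\le (d-a)^\alpha$ to get $E=3$, and you note that Jensen's inequality for the concave map $t\mapsto t^\alpha$ sharpens this to $E=3^{1-\alpha}$. The paper instead writes $(g(d)-g(a))/(d-a)^\alpha$ as the inner product of the vector $\bigl((b-a)^\alpha,(c-b)^\alpha,(d-c)^\alpha\bigr)/(d-a)^\alpha$ with the vector of difference quotients, applies Cauchy--Schwarz, and then bounds the Euclidean norm of the first vector via a separate auxiliary inequality; this produces $E=\max\{3^{1-\alpha},\sqrt{3}\}$. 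Your route avoids the detour through Cauchy--Schwarz entirely, and the concavity version even yields a constant that is never worse (and is strictly better for $\alpha>1/2$). Since the lemma is only used qualitatively in Lemma~\ref{lemma: well-defined}, either approach suffices, but yours is the more economical one.
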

\begin{proof}
We first start with the following basic inequality. Let $\lambda_1,\lambda_2,\lambda_3 >0$. Then
\[\frac{\sqrt{\lambda_1^{2\alpha}+\lambda_2^{2\alpha}+\lambda_3^{2\alpha}}}{(\lambda_1+\lambda_2+\lambda_3)^\alpha}\leq \max\{3^{1/2-\alpha},1\}.\]
This can be proved by noting that if $\alpha <1/2$ the left hand side is maximised for $\lambda_1=\lambda_2=\lambda_3$ and if $\alpha >1/2$ then it is maximised when two of the parameters are zero.
    Assume first that $b\neq c$. Then we can write $(g(d)-g(a))/(d-a)^\alpha$ as the dot product
    \begin{equation*}
        \begin{split}
             \frac{g(d)-g(a)}{(d-a)^\alpha} &= \frac{1}{(d-a)^\alpha}\begin{bmatrix}
                 (b-a)^\alpha & (c-b)^\alpha & (d-c)^\alpha
             \end{bmatrix}\begin{bmatrix}
                 M_1 \\ M_2\\ M_3
             \end{bmatrix} \text{, where} 
            \\ M_1&= \frac{f(b)-f(a)}{(b-a)^\alpha},M_2= \frac{f(c)-f(b)}{(c-b)^\alpha}, M_3= \frac{f(d)-f(c)}{(d-c)^\alpha}.
        \end{split}
    \end{equation*}
    In particular, by submultiplicativity of the dot product with respect to the usual vector norm and the aforementioned inequality we have
    \[\abs{\frac{g(d)-g(a)}{(d-a)^\alpha}}\leq \max\{3^{1/2-\alpha},1\} \sqrt{M_1^2+M_2^2+M_3^3}\leq \max\{3^{1-\alpha},\sqrt{3}\}\max\{\abs{M_1},\abs{M_2},\abs{M_3}\}, \]
    which proves the lemma for $b<c$. For $b=c$ the proof is analogous . 
\end{proof}

\begin{lemma}\label{lemma: well-defined}
    The operator $P_{l,N}:C^\alpha([0,1])\to C^\alpha([0,1])$ is well-defined, continuous and has uniformly bounded operator norm over all $l,N$. 
\end{lemma}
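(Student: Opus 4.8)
The plan is to prove the quantitative bound $\|P_{l,N}f\|\le \max\{1,E\}\,\|f\|$ for every $f\in C^\alpha([0,1])$ and all $l,N$, where $E$ is the constant furnished by Lemma \ref{lemma: hölder lemma}; well-definedness and continuity of $P_{l,N}(f)$ are then immediate, since a function with finite Hölder seminorm is automatically continuous, and boundedness together with uniformity in $l,N$ is exactly the claimed operator-norm estimate.

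First I would verify that the recipe in the definition actually produces a function on all of $[0,1]$. For $x\notin\widetilde{\mathcal{P}_{l,N}}$, the set $\widetilde{\mathcal{P}_{l,N}}$ is closed and contains both $0$ and $1$, so $\{p\in\widetilde{\mathcal{P}_{l,N}}:p<x\}$ and $\{p\in\widetilde{\mathcal{P}_{l,N}}:p>x\}$ are nonempty compact sets; their maximum $a$ and minimum $b$ therefore exist, the open interval $(a,b)$ is disjoint from $\widetilde{\mathcal{P}_{l,N}}$, and the interpolation formula makes sense. Since on each such $[a,b]$ the value $P_{l,N}(f)(x)$ is a convex combination of $f(a)$ and $f(b)$, and $P_{l,N}(f)=f$ on $\widetilde{\mathcal{P}_{l,N}}$, we obtain at once $\|P_{l,N}(f)\|_\infty\le\|f\|_\infty$.

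The heart of the proof is the seminorm bound $\|P_{l,N}(f)\|_\alpha\le E\|f\|_\alpha$. The basic observation is that on the closure $[a,b]$ of a complementary interval, $P_{l,N}(f)$ is affine with slope bounded in modulus by $\|f\|_\alpha(b-a)^{\alpha-1}$, so for $x,x'\in[a,b]$, using $\alpha<1$ and $|x-x'|\le b-a$, one gets $|P_{l,N}(f)(x)-P_{l,N}(f)(x')|\le\|f\|_\alpha|x-x'|^\alpha$; in particular $|P_{l,N}(f)(x)-f(b)|\le\|f\|_\alpha(b-x)^\alpha$, and likewise at the left endpoint. For arbitrary $x<y$: if they lie in the closure of a single complementary interval, this estimate finishes the job; otherwise, let $b$ be the right endpoint of the complementary interval containing $x$ (or $b=x$ if $x\in\widetilde{\mathcal{P}_{l,N}}$) and $c$ the left endpoint of the complementary interval containing $y$ (or $c=y$ if $y\in\widetilde{\mathcal{P}_{l,N}}$). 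Then $x\le b\le c\le y$ with $b,c\in\widetilde{\mathcal{P}_{l,N}}$, so applying Lemma \ref{lemma: hölder lemma} to the four points $x\le b\le c\le y$ with $g=P_{l,N}(f)$ bounds the Hölder ratio of $P_{l,N}(f)$ over $[x,y]$ by $E$ times the maximum of $|f(b)-P_{l,N}(f)(x)|/(b-x)^\alpha$, $|f(c)-f(b)|/(c-b)^\alpha$ and $|P_{l,N}(f)(y)-f(c)|/(y-c)^\alpha$; the first and third are $\le\|f\|_\alpha$ by the single-interval estimate, and the middle one because $f$ itself is $\alpha$-Hölder — crucially, it does not matter how $P_{l,N}(f)$ behaves on the intermediate interval $(b,c)$, since Lemma \ref{lemma: hölder lemma} only sees the endpoint values. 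Combining the two norm estimates gives $\|P_{l,N}(f)\|\le\max\{1,E\}\|f\|$ with the constant independent of $l$ and $N$.

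The only point requiring care is the bookkeeping in this last case analysis: one must check that the reduction to the three consecutive gaps $[x,b]$, $[b,c]$, $[c,y]$ remains legitimate in the degenerate configurations — $x$ or $y$ already in $\widetilde{\mathcal{P}_{l,N}}$, or $b=c$ — where one simply drops the collapsed gap and applies Lemma \ref{lemma: hölder lemma} in its $b=c$ form (or, when only two points remain, the triangle-type inequality it specialises to). Once that is in place, no estimates beyond Lemma \ref{lemma: hölder lemma} and the elementary slope computation are needed.
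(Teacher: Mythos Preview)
Your proof is correct and follows essentially the same approach as the paper's: both arguments bound the H\"older seminorm by splitting the difference at the inner endpoints $b,c$ of the complementary intervals containing the two test points and then invoking Lemma~\ref{lemma: hölder lemma}. The only cosmetic differences are that the paper first proves continuity of $P_{l,N}(f)$ separately (via the intermediate value theorem) before turning to the seminorm, and sidesteps your degenerate configurations by perturbing the test points off the accumulation set rather than treating the collapsed cases directly.
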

\begin{proof}
    It is fairly straightforward to show that for $f\in C^\alpha([0,1])$ the assignment $f\mapsto P_{l,N}(f)$ is linear. We therefore just need to show continuity and Hölder continuity of $f\mapsto P_{l,N}(f)$ and bound the operator norm. 

    We note that if $(a,b)\subset [0,1]\setminus \widetilde{\mathcal{P}_{l,N}}$ is a connected component, then $P_{l,N}(f)((a,b))\subset f((a,b))$ by the intermediate value theorem, which proves continuity at the accumulation points and hence continuity of $f$ everywhere. 

    We now estimate the Hölder norm. Let $a<d$ be two points in $[0,1]$. We estimate the quantity $H:=\abs{P_{l,N}(f)(d)-P_{l,N}(f)(a)}/{\abs{d-a}^\alpha}$. By perturbing the two points by an arbitrarily small amount, we may assume $a$ and $d$ are not accumulation points of $\mathcal{P}_{l,N}(f)$. Let $(x,b)$ and $(c,y)$ be the connected components of $[0,1]\setminus \widetilde{\mathcal{P}_{l,N}}$ such that $a\in (x,b)$ and $d\in(c,y)$. In case the two intervals are the same, it is clear that $H\leq \|f\|_\alpha$. In case they are different, we let $g=P_{l,N}(f)$ on $\{a,b,c,d\}$ as in Lemma \ref{lemma: hölder lemma}, whence we obtain that 
\begin{equation*}
        H\leq \max\{3^{1-\alpha},\sqrt{3}\}\max\left\{\abs{\frac{g(b)-g(a)}{(b-a)^\alpha}},\abs{ \frac{g(c)-g(b)}{(c-b)^\alpha}},\abs{\frac{g(d)-g(c)}{(d-c)^\alpha}}\right\}.
\end{equation*}
    The lemma follows after noting that $P_{l,N}(f)$ coincides with $f$ on $x,b,c,y$ and using that
\[ \frac{\abs{P_{l,N}(f)(b)-P_{l,N}(f)(a)}}{(b-a)^\alpha} \leq \frac{\abs{f(b)-f(x)}}{(b-x)^\alpha} \text{ and } \frac{\abs{P_{l,N}(f)(d)-P_{l,N}(f)(c)}}{(d-c)^\alpha} \leq \frac{\abs{f(y)-f(c)}}{(y-c)^\alpha}.\]
\end{proof}

\subsection{Ruelle's Corollary}
If $T:B_1\to B_2 $ is a bounded linear operator between Banach spaces, denote its kernel by $\ker(T)$. If $T$ has closed image $\Ima(T)$, denote its cokernel by $\coker(T):= B_2/\Ima(T)$. We say $T$ is Fredholm of index $k$ if $T$ has closed image, $\coker(T):= B_2/\Ima(T)$ and $\ker(T)$ are finite and $\mathrm{dim}\ker(T)-\mathrm{dim} \coker(T) =k $. The composition of two Fredholm operators with respective indices $k,l$ is Fredholm with index $k+l$. 
For a Banach space $B$, we denote its dual by $B^*$ and denote by $T^*$ the induced adjoint operator $T^*:B_2^*\to B_1^*:G\mapsto G\circ T$. The following is a straightforward exercise in functional analysis. 
\begin{lemma}\label{lemma: basic functional analysis}
    If $T:B_1\to B_2 $ has closed image and $\ker(T^*):=\{H\in B_2^*: H\circ T =0\}$ is finite-dimensional, then $\mathrm{dim}\ker(T^*)= \mathrm{dim}\coker(T) $. 
\end{lemma}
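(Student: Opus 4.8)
The statement to prove is Lemma~\ref{lemma: basic functional analysis}: for a bounded linear operator $T:B_1\to B_2$ with closed image, if $\ker(T^*)$ is finite-dimensional then $\dim\ker(T^*)=\dim\coker(T)$. The plan is to identify the annihilator of $\Ima(T)$ inside $B_2^*$ with $\ker(T^*)$, and then use the standard duality between a closed subspace and the dual of the quotient.

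\textbf{Key steps.} First I would observe that, directly from the definition $T^*(G)=G\circ T$, a functional $G\in B_2^*$ lies in $\ker(T^*)$ if and only if $G$ vanishes on $\Ima(T)$; that is, $\ker(T^*)=\Ima(T)^{\perp}$, the annihilator of the subspace $\Ima(T)$ in $B_2^*$. Second, since by hypothesis $\Ima(T)$ is a closed subspace of $B_2$, the quotient $\coker(T)=B_2/\Ima(T)$ is a Banach space, and the canonical quotient map $q:B_2\to B_2/\Ima(T)$ induces an isometric isomorphism $q^*:(B_2/\Ima(T))^*\to \Ima(T)^{\perp}$ — this is the usual fact that the dual of a quotient is the annihilator of the subspace one quotiented by (see e.g. any functional analysis reference; it follows from the universal property of the quotient together with the Hahn–Banach theorem, which guarantees $q^*$ is onto $\Ima(T)^\perp$). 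Combining these two identifications gives a linear isomorphism $(B_2/\Ima(T))^*\cong \ker(T^*)$, hence $\dim(\coker(T))^* = \dim\ker(T^*)$. Finally, since $\ker(T^*)$ is finite-dimensional by hypothesis, so is $(\coker(T))^*$, and therefore $\coker(T)$ is itself finite-dimensional with $\dim\coker(T)=\dim(\coker(T))^*=\dim\ker(T^*)$, using that a (real or complex) vector space and its dual have the same dimension in the finite-dimensional case.

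\textbf{Main obstacle.} There is no real obstacle here — the content is entirely the Hahn–Banach surjectivity of $q^*$ onto the annihilator, which is where closedness of $\Ima(T)$ is used (so that $\coker(T)$ is a genuine Banach space and $q$ is an open map). The only point requiring a moment's care is the very last reduction: a priori the isomorphism is with $(\coker(T))^*$ rather than with $\coker(T)$, so one must invoke finite-dimensionality (supplied by the hypothesis on $\ker(T^*)$) before concluding the dimensions of $\coker(T)$ and $\ker(T^*)$ agree. I would write the proof in three short lines accordingly.
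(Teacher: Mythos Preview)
Your proposal is correct and is exactly the standard argument one would expect; the paper in fact does not prove this lemma at all, stating only that it is ``a straightforward exercise in functional analysis.'' Your three-step outline (identify $\ker(T^*)$ with the annihilator $\Ima(T)^\perp$, invoke the Hahn--Banach duality $(B_2/\Ima(T))^*\cong\Ima(T)^\perp$ for closed $\Ima(T)$, then use finite-dimensionality to pass from $(\coker T)^*$ to $\coker T$) is precisely the intended exercise.
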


Suppose now $B=B_1=B_2$, so $T:B\to B$. We recall the following facts about the spectral theory of operators on Banach spaces
\begin{itemize}
    \item If $\lambda \neq \sigma_e(T)$, then $\lambda I-T$ is Fredholm of index $0$. 
    \item If $T$ is a compact operator, so is $T^*$. 
\end{itemize}
We refer the reader to \cite{Evans18} for more details on the spectral theory of Banach operators, noting that $\sigma_e(F)$ in this preprint corresponds to $\sigma_{e5}(F)$ in their book. 
\begin{proof}[Proof of Corollary \ref{Corollary: Ruelle}.]
    We remind the reader that this proof is a direct adaptation from \cite{Ruelle89}. 
    It suffices to prove that any eigenvalue $\lambda$ of $L_\beta$ with $\lambda > \rho(L_{\Re(\beta)+\alpha})\geq\rho_e(L_\beta)$ is an eigenvalue of $\mathcal{\lambda}_\beta$ and that the dimensions of the respective generalised eigenspaces $E_{L_\beta}(\lambda)=\bigcup_{r=1}^{+\infty} \ker(L_\beta-\lambda I)^r$ and $E_{\mathcal{L}_\beta}(\lambda)=\bigcup_{r=1}^{+\infty}\ker(\mathcal{L}_\beta-\lambda I)^r$ are equal. We note that the unions are over a finite number of nontrivial kernels by definition of the essential spectral radius. By Lemma \ref{lemma: basic functional analysis} and the fact that $\mathcal{L}_\beta^*$ is compact operator and that the relevant operators are Fredholm of index 0, it suffices to prove that the generalised eigenspaces $E_{L_\beta}(\lambda)^*$ and $E_{\mathcal{L}_\beta}(\lambda)^*$ belonging respectively to  $L_\beta^*$ and $\mathcal{L}_\beta^*$ have equal dimension.

    There is a natural (injective) inclusion map $\iota: A_\infty(D) \to C^{\alpha}([0,1])$ defined by restriction to $[0,1]$. The dual operator $\iota^*:{C^{\alpha}}^* \to A_\infty(D)^*$ is defined by restricting distributions on ${C^{\alpha}([0,1])}$ to distributions on $A_\infty(D)$. This restricts to a natural linear map from $E_{L_\beta}(\lambda)^*$ to $E_{\mathcal{L}_\beta}(\lambda)^*$. Surjectivity of this map follows from the Hahn-Banach theorem. We now show injectivity. Suppose that for some $\sigma\in E_{L_\beta}(\lambda)^*$, its restriction $\sigma^A: A_\infty(D) \to \mathbb{C}$ is identically zero. We show $\sigma =0$. 
    
    In order to do so, let $\alpha'<\alpha $ such that $\lambda>\rho(L_{\Re(\beta)+\alpha'})$. Use Hahn-Banach to extend $\sigma$ to $\sigma^{\alpha'}: C^{\alpha'}([0,1])\to\mathbb{C}$. For any $f\in C^\alpha([0,1])$, let $f_n$ be a series of smooth approximations which converge in the $C^{\alpha'}([0,1])$-norm. By cutting of fourier expansions outside of balls of increasing radii in Fourier space, we may assume the maps $f_n$ are analytically extendable to elements of $A_\infty(D)$. Hence we obtain 
    \[0= \lim_{n\to \infty}\sigma^{A}(f_n)=\lim_{n\to \infty}\sigma^{\alpha'}(f_n)= \sigma^{\alpha'}(f)= \sigma(f),\]
    which is what we had to show. 
\end{proof}

\section{Acknowledgements}

\printbibliography
\end{document}